\documentclass{amsart}

\usepackage{amssymb}
\usepackage{amsthm}

% THEOREM Environments (Examples)-----------------------------------------
%
 \newtheorem{theorem}{Theorem}[section]
 \newtheorem{corollary}[theorem]{Corollary}
 \newtheorem{lemma}[theorem]{Lemma}

 \newtheorem{rem}[theorem]{Remark}

 %\numberwithin{equation}{section}
% left angle
% right angle
% Omega
\newcommand{\Z}{\mathbb{Z}}
\newcommand{\C}{\mathbb{C}}
\begin{document}
%--------------------------------------------------------------------------

\title[Character degrees of symmetric groups]{\bf Symmetric groups are determined by  their character degrees}

\author{Hung P. Tong-Viet}
\email{Tong-Viet@ukzn.ac.za}
\address{School of Mathematical Sciences,
University of KwaZulu-Natal\\
Pietermaritzburg 3209, South Africa}

\date{\today}
\keywords{character degrees, symmetric groups}
\subjclass[2000]{Primary 20C15}
\begin{abstract} Let $G$ be a finite group.  Let $X_1(G)$ be
the first column of the ordinary character table of $G.$ In this
paper, we will show that if $X_1(G)=X_1(S_n),$ then $G\cong S_n.$ As
a consequence, we show that $S_n$ is uniquely determined by the
structure of the complex group algebra $\C S_n.$

\end{abstract}

\thanks{Support from the University of KwaZulu-Natal is acknowledged}
\maketitle
%%% ----------------------------------------------------------------------

\section{Introduction and Notations}
All groups considered are finite and all characters are complex
characters. Let $G$ be a group and let
$Irr(G)=\{\chi_1,\chi_2,\cdots,\chi_k\}$ be the set of all irreducible
characters of $G.$ Put $n_i=\chi_i(1).$ We say that
$(n_1,n_2,\cdots,n_k)$ is the \emph{degree pattern} of $G.$ Let
$cd(G)=\{ \chi(1)\:|\:\chi\in Irr(G)\}$ be the set of all
irreducible character degrees of $G.$ Following \cite{Ber1}, let
$X_1(G)$ be the first column of the ordinary character table of $G.$ By a
suitable re-ordering of the rows in the character table of $G,$ we
can see that $X_1(G)$ coincides with the degree pattern
$(n_1,n_2,\cdots,n_k)$ of $G.$ We also consider $X_1(G)$ as a
multiset consisting of character degrees of $G$ counting
multiplicities. Since $|G|=\sum_{\chi\in Irr(G)}\chi(1)^2,$ the
order of $G$ is known given  $X_1(G).$ There are examples showing
that non-isomorphic groups may have the same character table and so
the first column of their character tables coincide. Using the
classification of finite simple groups, it is easy to see that
non-abelian simple groups are uniquely determined by their character
table. It was shown by Nagao \cite{Nagao} that the symmetric groups
$S_n$ are also uniquely determined by their character tables. In
\cite{Hung}, we know that the alternating group $A_n$ of degree at
least $5,$ and the sporadic simple groups are uniquely determined by
the first column of their character tables. In this paper, we will
prove a similar result for the symmetric groups.

\begin{theorem}\label{main} Let $G$ be a finite
group. If $X_1(G)= X_1(S_n),$ then $G\cong S_n.$
\end{theorem}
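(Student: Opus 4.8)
The plan is to reconstruct enough of the normal structure of $G$ from the multiset $X_1(G)$ to force $G\cong S_n$, with the classification of finite simple groups doing the decisive work.

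First I would read off the elementary consequences of the hypothesis. Since $|G|=\sum_{\chi\in Irr(G)}\chi(1)^2$ and the right-hand side is determined by $X_1(G)$, we get $|G|=|S_n|=n!$. The number of entries equal to $1$ in $X_1(G)$ is $|G:G'|$, and for $S_n$ with $n\ge 3$ it equals $2$; hence $G'$ has index $2$ in $G$. Moreover $cd(G)=cd(S_n)$, and the first few smallest members of $cd(S_n)$ are known explicitly (in particular $n-1$ is the smallest nontrivial degree once $n\ge 5$). The small cases I would treat by hand: for $n\le 4$ one inspects the finitely many groups of order $n!$ directly, and for $n=5,6$ one may also invoke the corresponding result for $A_n$ from \cite{Hung}; so assume $n$ is large from now on.

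The core step is to produce a normal subgroup $M\trianglelefteq G$ with $G/M$ almost simple of socle $A_n$. I would first show that $G$ is nonsolvable: otherwise the rigidity of $cd(S_n)$ — its cardinality, the pattern of its smallest elements, and the primes dividing its members measured against $|G|=n!$ — contradicts the classical constraints on the character degrees of solvable groups. Given nonsolvability, take a nonabelian chief factor $H/K\cong S^{\,m}$ with $S$ simple. By Clifford theory applied inside $G/K$, every degree in $cd(S^{\,m})$ divides some member of $cd(G)=cd(S_n)$; in particular a nontrivial degree of $S$ divides a degree of $S_n$, while $|S|^{m}$ divides $n!$. Comparing these two facts with the known lower bounds for the minimal nontrivial character degrees of the finite simple groups (the Landazuri--Seitz--Zalesskii bounds for groups of Lie type, together with the degree data for the alternating and sporadic groups) forces $S$ to be an alternating group, and then a finer weighing of degrees and of $|A_{n'}|^{m}$ against $n!$ pins down $m=1$ and $S\cong A_n$. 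Passing to $G/K$ and noting $C_{G/K}(H/K)=1$ (again because otherwise $cd$ would be too large), one obtains $M$ with $G/M$ almost simple and $\mathrm{soc}(G/M)\cong A_n$. I expect this to be the main obstacle, and inside it the delicate point is excluding the \emph{near-miss} alternating groups $A_{n'}$ with $n'$ close to $n$ using only divisibility of degrees and of group orders.

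It remains to upgrade this to $G\cong S_n$. Since $\mathrm{Aut}(A_n)=S_n$ for $n\ne 6$, the group $G/M$ is isomorphic to $A_n$ or to $S_n$; as $|G|=n!=|S_n|$ and $|M|$ divides $n!$, either $M=1$ and $G\cong S_n$, or $G/M\cong A_n$ and $|M|=2$. In the second case $M$ is normal of order $2$, hence central, so $G$ is a central extension of $A_n$ by a group of order $2$; such a group is either the perfect double cover $2.A_n$ or the direct product $A_n\times C_2$. The former is excluded because $|G:G'|=2\ne 1$, and the latter because $X_1(A_n\times C_2)$ has strictly more entries than $X_1(S_n)$ (it has two irreducible characters for each one of $A_n$), so it cannot equal $X_1(S_n)$. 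Hence $M=1$ and $G\cong S_n$, as desired.
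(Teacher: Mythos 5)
Your opening reductions ($|G|=n!$, $|G:G'|=2$, $cd(G)=cd(S_n)$) and your closing step (central extension of $A_n$ by $\Z_2$ is $2.A_n$ or $A_n\times\Z_2$, killed respectively by $|G:G'|=2$ and by counting irreducible characters via self-conjugate partitions) match the paper in substance. But the core step, which you yourself flag as the main obstacle, contains gaps that are not just matters of detail. First, your route to nonsolvability is a gesture at ``classical constraints on character degrees of solvable groups''; what is actually needed, and what the paper proves, is the stronger statement that $G'$ is perfect, obtained by applying Isaacs' Lemma 12.3 to a quotient $G/N$ with $G'/N$ minimal normal: the $p$-group branch dies because $|G:G'|=2$ would force $G/N$ modulo its center to be cyclic, and the Frobenius branch forces $2\in cd(S_n)$, impossible for $n\geq 5$. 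Perfectness of $G'$ is what lets one choose the chief factor to be $G'/M$ itself, so that $|G/M:\,G'/M|=2$ bounds the number $k$ of simple factors by $2$ -- with an arbitrary nonabelian chief factor $H/K$ you have no such bound on $m$.

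Second, your claim that $C_{G/K}(H/K)=1$ ``because otherwise $cd$ would be too large'' is false as stated: the nontrivial-centralizer case genuinely occurs as a candidate and leads exactly to $G\cong A_n\times\Z_2$ (note $cd(A_n\times\Z_2)=cd(A_n)$, and whether $cd(A_n)\not\subseteq cd(S_n)$ is, per the paper's own remark, only conjectural), so it can only be excluded by the class-counting identity $k(S_n)=2k(A_n)-3p_s(n)$. You do recover this configuration later via the central-extension argument, but only because you assumed the centralizer issue away earlier. Third, the elimination of $m=2$ and of non-alternating $S$ is where the real work lies, and divisibility of degrees plus Landazuri--Seitz--Zalesskii bounds is not shown to suffice. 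The paper's $m=2$ argument (Theorem \ref{th2}) is not a divisibility argument: it uses the index-$2$ structure to put both $2a$ and $a^2$ into $cd(S_n)$ for every $a\in cd(S)\setminus\{1\}$, and the Lie-type elimination (Theorem \ref{th1}, Step 1) hinges on the Steinberg character giving a prime-power member of $cd(S_n)$, the classification of prime-power degrees of $S_n$ (Lemma \ref{lem3}), and a separate treatment of $L_2(q)$ via Lemma \ref{lem8}. Your sketch would need to be replaced by arguments of this specificity; as written, the decisive reduction to $S\cong A_n$, $m=1$ is asserted rather than proved.
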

This gives a positive answer to \cite[Question $126$]{Ber1}. Let
$\C$ be the complex number field and let $G$ be a group. Denote by
$\C G$ the group algebra of $G$ over $\C.$ Let $G_i,i=1,2,$ be
groups. By Molien's Theorem (\cite[Theorem $2.13$]{Ber1}) we know
that $\C G_1\cong \C G_2$ if and only if $X_1(G_1)=X_1(G_2).$
Therefore, knowing the first column of the character table of a
group $G$ is equivalent to knowing the structure of the group
algebra $\C G.$ It is known that $\C G$ allows us to recognize the
Frobenius groups or the $p$-nilpotent groups (\cite[Corollaries
$10.11,$ and $10.27$]{Ber1}). Now Theorem \ref{main} yields.
\begin{corollary}\label{main1} Let $G$ be a group.
If $\C G\cong \C S_n,$ then $G\cong S_n.$
\end{corollary}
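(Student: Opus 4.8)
Corollary~\ref{main1} is immediate from Theorem~\ref{main}: by Molien's Theorem the isomorphism $\C G\cong\C S_n$ is equivalent to $X_1(G)=X_1(S_n)$, so it suffices to treat Theorem~\ref{main}, and I describe a plan for it. Since $X_1(G)$ is the degree pattern of $G$, it records both $|G|=\sum_{\chi}\chi(1)^2=n!$ and the number of linear characters of $G$, which equals $|G:G'|$; for $n\ge 2$ the group $S_n$ has exactly two linear characters, so $G'$ has index $2$ in $G$ and $|G'|=n!/2=|A_n|$. The finitely many small cases (say $n\le 7$) I would dispose of by direct inspection of the explicit character tables, so assume $n$ is large.

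The core of the plan is to prove that $G'\cong A_n$. First I would rule out that $G$ is solvable, arguing that for large $n$ the degree data of $S_n$ violate the known arithmetic restrictions on solvable groups (Taketa's inequality, the Huppert $\rho$--$\sigma$ bounds, Ito--Michler, and the like). Let $R$ be the solvable radical of $G$. Using Clifford theory one sees that for every composition factor $T$ of $G$ and every $d\in cd(T)$, $d$ divides some member of $cd(G)=cd(S_n)$, while $|T|$ divides $|G|=n!$. Invoking the classification of finite simple groups and the known results on their low-degree irreducible characters—together with the facts that the smallest nonlinear degree in $cd(S_n)$ is $n-1$ and the largest is at most $\sqrt{n!}$—I would conclude that $G$ has exactly one nonabelian composition factor, and that it is $A_n$. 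The equality $|G|=n!$ then forces $R=1$, and squeezing the degree multiset a little further gives $G'\cong A_n$.

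It remains to identify the extension of $A_n$ by $\Z/2$. Conjugation gives a homomorphism $G\to\mathrm{Aut}(G')$ with kernel $C_G(G')$; since $G'\cong A_n$ is nonabelian simple, $C_G(G')\cap G'=1$, so $|C_G(G')|\le 2$. If $C_G(G')\ne 1$ then $G\cong A_n\times\Z/2$, which is impossible because $X_1(A_n\times\Z/2)$ is the multiset $X_1(A_n)$ with every multiplicity doubled, hence has $2\,|Irr(A_n)|$ entries, whereas $X_1(S_n)$ has $|Irr(S_n)|$ entries, and $2\,|Irr(A_n)|\ne|Irr(S_n)|$. Thus $C_G(G')=1$ and $G$ embeds in $\mathrm{Aut}(A_n)$ with $|G|=2|A_n|$; for $n\ne 6$ one has $\mathrm{Aut}(A_n)\cong S_n$, so $G\cong S_n$, while for $n=6$ the three index-$2$ subgroups of $\mathrm{Aut}(A_6)$, namely $S_6$, $\mathrm{PGL}_2(9)$ and $M_{10}$, are separated by their character tables. (Alternatively, once $|G:G'|=2$ is known, one could instead try to recover $X_1(G')$ from $X_1(G)$ and quote the corresponding theorem for $A_n$ from \cite{Hung}.)

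I expect the main obstacle to be the middle paragraph. Bounding which nonabelian simple groups $T$ can occur, given only that $|T|$ divides $n!$ and that $cd(T)$ ``divides $cd(S_n)$ entrywise'', is a genuine case analysis over the classification, awkward both for alternating groups $A_m$ with $m<n$ and for classical groups of unbounded rank whose orders divide $n!$. Equally delicate is the last stage of that paragraph: after $A_n$ has been singled out, excluding a nontrivial solvable radical or a second composition factor seems to need the full strength of the multiset $X_1(G)$—the exact multiplicities of the degrees, not merely the underlying set $cd(G)$—played off against $|G|=n!$.
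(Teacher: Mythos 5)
Your reduction of the corollary to Theorem~\ref{main} via Molien's Theorem is exactly the paper's proof of this statement, so the proposal is correct and takes the same approach. The accompanying sketch of Theorem~\ref{main} also broadly tracks the paper's actual argument (index-$2$ derived subgroup, identification of the unique nonabelian composition factor as $A_n$ via the classification, elimination of $A_n\times\Z_2$ by counting irreducible characters using self-conjugate partitions, and the separate treatment of $n=6$), though the paper organizes the middle step differently: it first proves $G'=G''$ using \cite[Lemma 12.3]{Isaacs} and the fact that $2\notin cd(S_n)$, then passes to a chief factor $G'/M\cong S^k$ and needs a dedicated result (Theorem~\ref{th2}) to exclude $k=2$ before applying Theorem~\ref{th1}.
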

We should mention that Brauer's Problem $1$ (see \cite{Brauer})
which asks the following: What are the possible degree patterns of
finite groups? Little is known about this problem. Now Corollary
\ref{main1} says that there is exactly one isomorphism type of the
group algebra with a degree pattern as that of the symmetric groups.

We now outline our argument for the proof of Theorem \ref{main}.
Assume that $X_1(G)=X_1(S_n).$ We first observe that
$|G:G'|=2,|G|=n!, k(G)=k(S_n)$ and $cd(G)=cd(S_n),$ where $k(G)$
denotes the number of conjugacy classes of $G.$ The result is
trivial when $n\leq 4.$ Hence we will assume that $n\geq 5.$ Next we
will show that $G'$ is perfect, that is $G'=G'',$ by applying
\cite[Lemma $12.3$]{Isaacs}. Choose $M\leq G'$ be a normal subgroup
of $G$ so that $G'/M$ is a chief factor of $G.$ As $|G:G'|=2$ and
$G'/M$ is non-abelian, we deduce that $G'/M\cong S^k,$ where $S$ is
a non-abelian simple group and $k$ is at most $2.$ We proceed to
show that $G'/M$ must be a simple group that is $k=1.$ This is done
by applying Theorem \ref{th2}. We now deduce that either $G/M$ is an
almost simple group with socle $G'/M$ or $G/M\cong G'/M\times \Z_2.$
We now apply Theorem \ref{th1} which asserts that if $H$ is an
almost simple group and $cd(H)\subseteq cd(S_n),n\geq 5,$ then the
socle of $H$ must be isomorphic to $A_n,$ to show that $G'/M\cong
A_n.$ Assume that $n\neq 6.$ By comparing the orders, $G\cong S_n$
or $G\cong A_n\times \Z_2.$ Finally, using the fact that $G$ and
$S_n$ have the same number of irreducible characters, we can
eliminate the latter case. Thus $G$ must be isomorphic to $S_n.$ In
the exceptional case, we have $|Out(A_6)|=4.$ In this case, $G$ is
one of the following groups: $A_6\times \Z_2, PGL_2(9)\cong
A_6.2_2,M_{10}\cong A_6.2_3$ or $S_6.$ Using \cite{atlas}, we
conclude that $G\cong S_6.$ We remark that this argument is based on
the Huppert's method given in \cite{Hupp}. This method is used to verify
the Huppert Conjecture which states that non-abelian simple groups are
determined by their sets of character degrees (see
\cite{Hupp,Wake}).

Here are some notation. If $cd(G)=\{s_0,s_1,\cdots,s_t\},$ where $1=s_0<s_1<\cdots<s_t,$ then we
define $d_i(G)=s_i$ for all $1\leq i\leq t.$  Then $d_i(G)$ is the $i^{th}$ smallest degree of the
non-trivial character degrees of $G.$ If $n$ is an integer then we denote by $\pi(n)$ the set of
all prime divisors of $n.$ If $G$ is a group, we will write $\pi(G)$ instead of $\pi(|G|)$ to
denote the set of all prime divisors of the order of $G.$ Let $p(G)=max(\pi(G))$ be the largest
prime divisor of the order of $G$ and let $\rho(G)=\cup_{\chi\in Irr(G)}\pi(\chi(1))$ be the set of
all primes which divide some irreducible character degrees of $G.$ Finally, if $N\unlhd G$ and
$\theta\in Irr(N),$ then the inertia group of $\theta$ in $G$ is denoted by $I_G(\theta).$ Other
notations are standard.
%---------------------------------------------------------------------------------------------------------------------------------
\section{Preliminaries}
Let $n$ be a positive integer. We call
$\lambda=(\lambda_1,\lambda_2,\dots, \lambda_r)$ a  partition of
$n,$ written $\lambda\vdash n,$ provided $\lambda_i, i=1,2,\dots, r$
are integers, with $\lambda_1\geq\lambda_2\geq \dots\geq
\lambda_r>0$ and $\sum_{i=1}^r \lambda_i=n.$ We collect the same
parts together and write
$\lambda=(\ell_1^{a_1},\ell_2^{a_2},\cdots,\ell_k^{a_k}),$ with
$\ell_i>\ell_{i+1}>0$ for $i=1,\cdots, k-1;a_i\neq 0;$ and
$\sum_{i=1}^k a_i\ell_i=n.$ It is well known that the irreducible
complex characters of the symmetric group $S_n$ are parametrized by
partitions of $n.$ Denote by $\chi^\lambda$  the irreducible
character of $S_n$ corresponding to partition $\lambda.$ The
irreducible characters of the alternating group $A_n$ are then
obtained by restricting $\chi^\lambda$ to $A_n.$ In fact,
$\chi^\lambda$ is still irreducible upon restriction to the
alternating group $A_n$ if and only if $\lambda$ is not
self-conjugate. Otherwise, $\chi^\lambda$ splits into two
irreducible characters of $A_n$ having the same degree. The
following result on the minimal character degrees of symmetric groups is
due to Rasala \cite{Ras}.
\begin{lemma}\emph{(\cite[Result $3$]{Ras}).}\label{lem1}
Let $\lambda$ be a partition of $n.$

$(a)$ If  $n\geq 15,$ then the first $6$ nontrivial minimal
character degrees of  $S_n$ are:

$(1)$ $d_1(S_n)=n-1$ and $\lambda\in\{(n-1,1),(2,1^{n-2})\};$

$(2)$ $d_2(S_n)=\frac{1}{2}n(n-3)$ and
$\lambda\in\{(n-2,2),(2^2,1^{n-4})\};$

$(3)$ $d_3(S_n)=d_2(S_n)+1=\frac{1}{2}(n-1)(n-2)$ and
$\lambda\in\{(n-2,1^2),(3,1^{n-3})\};$

$(4)$ $d_4(S_n)=\frac{1}{6}n(n-1)(n-5)$ and
$\lambda\in\{(n-3,3),(2^3,1^{n-6})\};$

$(5)$ $d_5(S_n)=\frac{1}{6}(n-1)(n-2)(n-3)$ and
$\lambda\in\{(n-3,1^3),(4,1^{n-4})\};$

$(6)$ $d_6(S_n)=\frac{1}{3}n(n-2)(n-4)$ and
$\lambda\in\{(n-3,2,1),(3,2,1^{n-5})\};$

$(b)$  If  $n\geq 22,$ then the next five smallest character degrees
are:

$(7)$ $d_7(S_n)=n(n-1)(n-2)(n-7)/24$ and
$\lambda\in\{(n-4,4),(2^4,1^{n-8})\};$

$(8)$ $d_8(S_n)=(n-1)(n-2)(n-3)(n-4)/24$ and
$\lambda\in\{(n-4,1^4),(5,1^{n-5})\};$

$(9)$ $d_9(S_n)=n(n-1)(n-4)(n-5)/12$ and
$\lambda\in\{(n-4,2^2),(3^2,1^{n-6})\};$

$(10)$ $d_{10}(S_n)=n(n-1)(n-3)(n-6)/8$ and
$\lambda\in\{(n-4,3,1),(3,2^2,1^{n-7})\};$

$(11)$ $d_{11}(S_n)=n(n-2)(n-3)(n-5)/8$ and
$\lambda\in\{(n-4,2,1^2),(4,2,1^{n-6})\};$
\end{lemma}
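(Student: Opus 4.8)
The plan is to derive everything from the hook length formula, viewing $\chi^\lambda(1)$ as a function of $n$ once the ``shape near the bottom'' of $\lambda$ is fixed. I would work with the form
$$\chi^\lambda(1)=\frac{n!\,\prod_{1\le i<j\le r}(\ell_i-\ell_j)}{\prod_{i=1}^r \ell_i!},\qquad \ell_i=\lambda_i+r-i,$$
where $r$ is the number of parts of $\lambda$ and $\ell_1>\ell_2>\dots>\ell_r\ge 0$. Since $\chi^{\lambda'}=\chi^\lambda\otimes\mathrm{sgn}$ has the same degree, it is enough to treat partitions with $\lambda_1\ge\lambda_1'$ (first part at least the number of parts) and then close up under conjugation; for such a $\lambda$ write $m:=n-\lambda_1$ and $\lambda=(n-m,\mu)$ with $\mu\vdash m$ and $\mu_1\le n-m$. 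Note every partition or its conjugate is of this form.

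The first step is the polynomial structure. Substituting $\ell_1=n-m+r-1$ (large) and $\ell_2,\dots,\ell_r$ (depending only on $\mu$) into the formula and cancelling, one gets that for each fixed $\mu\vdash m$ there is a polynomial $P_\mu\in\mathbb{Q}[x]$ with $\chi^{(n-m,\mu)}(1)=P_\mu(n)$ for all $n$ with $n-m\ge\mu_1$, that $\deg P_\mu=m$, and that the leading coefficient of $P_\mu$ equals $\chi^\mu(1)/m!$; in fact $P_\mu(n)$ is visibly a product of $m$ integer-linear factors in $n$ times the combinatorial constant coming from $\mu$, so it can be written down explicitly. Two easy monotonicity facts will be useful: dimension is nondecreasing along Young-diagram inclusion (chain single-box additions and apply the branching rule at each step); and, using an explicit injection of standard tableaux, $\chi^{(n-m,m)}(1)\le\chi^{(n-m,\mu)}(1)$ for every $\mu\vdash m$ with $m\le n-m$, where $\chi^{(n-m,m)}(1)=\binom nm-\binom n{m-1}$.

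The heart of the argument, and the main obstacle, is the bulk estimate: every $\lambda$ not appearing in the statement must be shown to have degree exceeding $d_6(S_n)$ in part (a), resp.\ $d_{11}(S_n)$ in part (b). If $\lambda_1\ge n/2$ (so $m=n-\lambda_1\le n/2$) and $m\ge 4$ (resp.\ $m\ge 5$), then $\chi^\lambda(1)\ge\binom nm-\binom n{m-1}$, and one verifies that this exceeds $d_6(S_n)$ for every $m$ in $[4,\lfloor n/2\rfloor]$ exactly when $n\ge 15$ — the obstruction at $n=14$ being the near-square partition $(7,7)$, of degree $429<560=d_6(S_{14})$ — and exceeds $d_{11}(S_n)$ for every $m$ in $[5,\lfloor n/2\rfloor]$ exactly when $n\ge 22$, the critical competitor being $(n-5,5)$, whose degree $\tfrac1{120}n(n-1)(n-2)(n-3)(n-9)$ coincides with $d_{11}(S_n)=\tfrac18 n(n-2)(n-3)(n-5)$ precisely at $n=21$. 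If instead $\lambda_1<n/2$ while still $\lambda_1\ge\lambda_1'$, then $\lambda$ has at least three parts, lies in a box with both sides less than $n/2$, and $\lambda_1\ge\sqrt n$ (from $n=\sum\lambda_i\le\lambda_1\lambda_1'$); here $\lambda$ contains a tall hook or a fat few-row sub-diagram whose dimension — computed by the hook formula — is exponentially large in $n$, so these cases are dispatched crudely. The technical nuisance is exactly this fat-partition case: the asymptotics are obvious, but one must make the inclusion chosen uniform enough to beat a degree-$4$ polynomial for all $n\ge 15$.

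Once the bulk is removed, the surviving $\lambda$ have $\min(n-\lambda_1,n-\lambda_1')\le 3$ in part (a) (resp.\ $\le 4$ in part (b)), so $\lambda$ or $\lambda'$ is $(n-m,\mu)$ for one of the finitely many partitions $\mu$ of size at most $3$ (resp.\ $4$). The proof then finishes by listing these $\mu$, writing out each $P_\mu$, and sorting. For part (a) this yields exactly $(n),(n-1,1),(n-2,2),(n-2,1^2),(n-3,3),(n-3,1^3),(n-3,2,1)$ together with their conjugates, with nontrivial degrees $n-1<\tfrac12n(n-3)<\tfrac12(n-1)(n-2)<\tfrac16n(n-1)(n-5)<\tfrac16(n-1)(n-2)(n-3)<\tfrac13n(n-2)(n-4)$, and one checks these strict inequalities hold for $n\ge 15$. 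Part (b) is the same bookkeeping one level higher: adjoin the size-$4$ partitions $\mu\in\{(4),(1^4),(2^2),(3,1),(2,1^2)\}$, obtain $d_7,\dots,d_{11}$ from their $P_\mu$, and verify $d_7<d_8<d_9<d_{10}<d_{11}$ directly from the explicit quartics. In all of this the genuinely delicate point is not the asymptotic ordering — that is forced by the leading terms $\chi^\mu(1)n^m/m!$ — but pinning the exact thresholds $15$ and $22$, which needs the explicit small-$\mu$ polynomials and the precise crossover computations against the competing near-square partitions.
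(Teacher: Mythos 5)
First, a point of comparison: the paper does not prove this lemma at all — it is imported verbatim as Rasala's Result 3 — so there is no in-paper argument to measure yours against. Your outline does reconstruct the architecture of Rasala's actual proof: reduce modulo conjugation to $\lambda=(n-m,\mu)$ with $m=n-\lambda_1$ small, exploit that $\chi^{(n-m,\mu)}(1)=P_\mu(n)$ is a degree-$m$ polynomial with leading coefficient $\chi^\mu(1)/m!$, and pin the thresholds $15$ and $22$ against the competitors $(7,7)$ and $(n-5,5)$. Your crossover numerics are right: $(7,7)$ has degree $429<560=d_6(S_{14})$, and $(n-1)(n-9)/15=(n-5)$ exactly at $n=21$, which is why the cutoffs are $15$ and $22$.

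There is, however, a genuine gap precisely where you flag ``the technical nuisance'': the case $\lambda_1\ge\lambda_1'$ with $\lambda_1<n/2$. The dispatching device you propose — a contained tall hook, or a contained ``fat few-row sub-diagram'' of exponentially large dimension — fails quantitatively in exactly the range that determines the threshold. Take $n=16$ and $\lambda=(4,4,4,4)$: the principal hook $(4,1^3)$ has dimension $\binom{6}{3}=20$, the first two rows $(4,4)$ give $14$, and the first three rows $(4,4,4)$ give $462$, all below $d_6(S_{16})=896$; only the full diagram, of dimension $24024$, clears the bar. So no uniform sub-diagram inclusion of the kind you describe works for small $n$, and this case needs a real argument (Rasala's reduction theorems, or a direct estimate on the hook product for partitions contained in a $\lambda_1\times\lambda_1$ box with $\lambda_1<n/2$). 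A second, smaller debt: the claim that $(n-m,m)$ minimizes the degree among all $(n-m,\mu)$ with $\mu\vdash m$ is true but is itself a nontrivial lemma on which your entire bulk estimate rests, and the promised injection of standard tableaux is not exhibited. Until these two points are supplied the proposal is a correct road map rather than a proof; since the result is being quoted from the literature anyway, citing Rasala, as the paper does, is the economical course.
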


Assume $n\geq 5.$ Using \cite{atlas, GAP} and Lemma \ref{lem1}, we
can see that $d_1(S_n)=n-1$ and $d_2(S_n)=n(n-3)/2$ if $n\neq 8$
while $d_2(S_8)=14.$ Similarly, if $n\geq 6,$ then $d_1(A_n)=n-1$
while $d_1(A_5)=3$ (see \cite{Hung}). The following is well-known.
\begin{lemma}\emph{(Tschebyschef).}\label{lem2} If $m\geq 15,$ then
there is at least one prime $p$ with $m/2<p\leq m.$
\end{lemma}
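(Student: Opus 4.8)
The plan is to recognize this as a form of Bertrand's postulate and to reduce it, by a short parity argument, to the statement that for every integer $N\ge 8$ there is a prime $p$ with $N<p\le 2N$; the latter I would prove by Erd\H{o}s's elementary argument via central binomial coefficients. For the reduction, given $m\ge 15$ set $N=\lceil m/2\rceil$, so that $N\ge 8$. A prime $p$ with $N<p\le 2N$ automatically satisfies $p>m/2$; moreover $2N=m$ if $m$ is even, while $2N=m+1$ if $m$ is odd, and then $m+1$ is even and hence composite, forcing $p\le m$. In all cases $m/2<p\le m$, so it suffices to prove the Bertrand-type bound for $N\ge 8$.

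First I would assemble the standard estimates for the central binomial coefficient $\binom{2N}{N}$. Since it is the largest of the $2N+1$ summands in $\sum_{j=0}^{2N}\binom{2N}{j}=4^{N}$, we get $\binom{2N}{N}\ge 4^{N}/(2N)$. By Legendre's formula the exponent $r_p$ of a prime $p$ in $\binom{2N}{N}$ equals $\sum_{i\ge 1}\bigl(\lfloor 2N/p^{i}\rfloor-2\lfloor N/p^{i}\rfloor\bigr)$; every term is $0$ or $1$ and all terms with $p^{i}>2N$ vanish, hence $p^{r_p}\le 2N$. In particular a prime $p>\sqrt{2N}$ divides $\binom{2N}{N}$ at most once, and an elementary check shows that no prime $p$ with $2N/3<p\le N$ divides $\binom{2N}{N}$ at all (for $N\ge 5$). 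I would also use the primorial bound $\prod_{p\le x}p<4^{x}$, which follows by induction on $x$: it reduces to odd $x=2k+1$, where $\binom{2k+1}{k}<4^{k}$ is divisible by every prime in $(k+1,2k+1]$.

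Now suppose for contradiction that no prime lies in $(N,2N]$. Then every prime factor of $\binom{2N}{N}$ is at most $2N/3$. Separating the prime factors $\le\sqrt{2N}$ (at most $\sqrt{2N}$ of them, each contributing a factor $\le 2N$) from those in $(\sqrt{2N},2N/3]$ (whose product is at most $\prod_{p\le 2N/3}p<4^{2N/3}$), we obtain
\[
\frac{4^{N}}{2N}\ \le\ \binom{2N}{N}\ \le\ (2N)^{\sqrt{2N}}\,4^{2N/3},
\]
so $4^{N/3}\le (2N)^{1+\sqrt{2N}}$, which is false once $N$ exceeds an explicit bound. For the remaining finitely many values of $N$, I would exhibit an increasing chain of primes $2,3,5,7,13,23,43,83,\dots$ in which each term is smaller than twice its predecessor; since consecutive terms $p_k<p_{k+1}<2p_k$ force $p_{k+1}\in(N,2N]$ for every integer $N$ with $p_{k+1}/2\le N<p_{k+1}$, this covers all $(N,2N]$ with $N\ge 1$, completing the reduction.

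The only real obstacle is quantitative: determining the precise threshold beyond which $4^{N/3}>(2N)^{1+\sqrt{2N}}$, and arranging the prime chain so that it reaches down to $N=8$, so that the hypothesis $m\ge 15$ (equivalently $N\ge 8$) is genuinely enough. The parity bookkeeping and the verification that the primes in $(2N/3,N]$ drop out of $\binom{2N}{N}$ are routine but need a little care. Of course, since this is a classical theorem of Chebyshev, one could instead simply cite it.
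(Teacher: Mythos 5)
Your proof is correct, but it takes a genuinely different route from the paper. The paper disposes of this lemma in one line: for $m\geq 17$ it cites a proposition of Malle (reference \cite{Malle99} in the bibliography, Proposition $5.1$), and for $15\leq m\leq 16$ it observes the statement directly (e.g.\ $13\in(8,16]$). You instead reduce the statement to Bertrand's postulate for $N=\lceil m/2\rceil\geq 8$ via a correct parity argument (the key point being that for odd $m$ the number $2N=m+1$ is even and $>2$, hence not prime, so a prime in $(N,2N]$ already lies in $(m/2,m]$), and then you prove Bertrand's postulate from scratch by Erd\H{o}s's argument with the central binomial coefficient. All the ingredients you list are right: the lower bound $\binom{2N}{N}\geq 4^{N}/(2N)$, the bound $p^{r_p}\leq 2N$ from Legendre's formula, the vanishing of primes in $(2N/3,N]$ for $N\geq 5$, and the primorial bound $\prod_{p\leq x}p<4^{x}$; the asymptotic contradiction $4^{N/3}\leq(2N)^{1+\sqrt{2N}}$ indeed fails for $N$ beyond roughly $500$, and your chain of primes (extended through $163,317,631$) covers the remaining range down to $N=8$. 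What your approach buys is a self-contained, fully elementary proof independent of the literature; what the paper's approach buys is brevity, which is appropriate since the result is classical and plays only an auxiliary role. The two pieces you flag as remaining --- the explicit threshold and the completion of the prime chain --- are routine, so I consider the proposal complete in outline.
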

\begin{proof} If $m\geq 17$ then the result follows from \cite[Proposition $5.1$]{Malle99}.
For $15\leq m\leq 16,$ the lemma is obvious.
\end{proof}

%----------------------------Prime power degree representations--------------------------------------------------
The following results on the classification of prime power character
degrees of symmetric groups will be used frequently.
\begin{lemma}\emph{(\cite[Theorem $5.1$]{bal}).}\label{lem3}
Suppose that $S_n$ possesses a non-trivial irreducible character
$\chi$ with $\chi(1)=p^d,$ where $p$ is a prime. Then one of the
following holds:

$(1)$ $n=p^d+1,$  and $\chi(1)=p^d;$

$(2)$ $n=4$ and $\chi(1)=2;$

$(3)$ $n=5$ and $\chi(1)=5;$

$(4)$ $n=6$ and $\chi(1)\in \{3^2,2^4\};$

$(5)$ $n=8$ and $\chi(1)=2^6;$

$(5)$ $n=9$ and $\chi(1)=3^3;$

\end{lemma}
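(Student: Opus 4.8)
The plan is to separate small and large $n$. For $n$ below an explicit bound --- say $n \le 21$, the range in which Lemma~\ref{lem1} is not yet fully in force --- one reads the statement off the character tables in \cite{atlas, GAP}: the only non-trivial prime-power degrees occurring are $n-1$ when $n-1$ is itself a prime power (realised by the hooks $(n-1,1)$ and $(2,1^{n-2})$, which is conclusion~(1)) together with the sporadic values in~(2)--(5$'$), all of which occur for $n \in \{4,5,6,8,9\}$ only. So assume from now on that $n \ge 22$, and let $\chi = \chi^{\lambda}$ be non-trivial with $\chi(1) = p^{d}$ for a prime $p$. Since $\chi^{\lambda}$ and $\chi^{\lambda'}$ have the same degree and conjugation interchanges the hooks $(n-1,1)$ and $(2,1^{n-2})$, I may assume $\lambda_{1} \ge \lambda_{1}'$.

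The main claim is that $\lambda$ must be a hook, so let me first record why this suffices. If $\lambda = (n-k, 1^{k})$ then the hook length formula gives $\chi(1) = \binom{n-1}{k}$, and by classical results on binomial coefficients (Sylvester--Erd\H{o}s: $\binom{m}{\ell}$ with $m \ge 2\ell$ has a prime factor exceeding $\ell$, so $\binom{m}{\ell}$ is a prime power only when $\min(\ell, m-\ell) \le 1$) we are forced into $k \in \{0,1,n-2,n-1\}$; the values $k = 1$ and $k = n-2$ give $\chi(1) = n-1 = p^{d}$, conclusion~(1), while $k = 0$ and $k = n-1$ give the trivial or the sign character, which are excluded.

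To prove that $\lambda$ is a hook I would argue by contradiction. Set $H := h(1,1) = \lambda_{1} + \lambda_{1}' - 1$, the largest hook length of $\lambda$; if $\lambda$ is not a hook its first row and first column do not exhaust it, so $H \le n-1$. Any prime $q$ with $H < q \le n$ divides $n!$ but none of the hook lengths of $\lambda$, hence divides $\chi(1) = n!/\prod_{(i,j)} h(i,j)$; since $\chi(1)$ is a power of $p$, every such $q$ equals $p$, so the interval $(H,n]$ contains at most one prime. By Lemma~\ref{lem2}, and in the finer range by quantitative statements on primes in short intervals (e.g.\ \cite[Proposition~5.1]{Malle99}), this rules out $\lambda$ unless $H$ is close to $n$, i.e.\ unless $\lambda$ is a \emph{near-hook} --- a hook perturbed only in a small region about the cell $(2,2)$. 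For such $\lambda$ one examines $\chi(1)$ more closely: after possibly replacing $\lambda$ by $\lambda'$, the hook length formula writes $\chi(1)$ as a product of a few linear factors in $n$ lying in a short window near $n$, divided by a fixed constant --- e.g.\ $d_{2}(S_{n}) = \tfrac{1}{2}n(n-3)$, $d_{3}(S_{n}) = \binom{n-1}{2}$, $d_{4}(S_{n}) = \tfrac{1}{6}n(n-1)(n-5)$, and so on from Lemma~\ref{lem1} --- and Lemma~\ref{lem2} together with Sylvester--Erd\H{o}s shows that for $n$ large at least two of these factors carry distinct prime divisors exceeding the window's width, which survive into $\chi(1)$ and contradict that it is a prime power. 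The finitely many residual values of $n$, together with the genuine hooks, fall into the range $n \le 21$ already handled (for the hooks via the previous paragraph). This yields the contradiction for $n \ge 22$ and completes the proof.

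The main obstacle I anticipate is the \emph{intermediate} regime in the third paragraph: when $\lambda$ is close to, but not equal to, a hook, so that $H$ is near $n$ and the crude count ``$(H,n]$ contains at most one prime'' is nearly vacuous. Ruling out these shapes uniformly in $n$ seems to require a genuinely finer analysis of the hook length formula --- locating two distinct prime divisors of $\chi(1)$ among the hook lengths and the factorials appearing in the denominator, not merely among the primes beyond the largest hook --- supported by effective prime-gap and smooth-number estimates, with the ``$n$ large'' thresholds in all these number-theoretic steps kept inside the range already cleared by computation. The remaining ingredients --- the reduction of hooks to the coefficients $\binom{n-1}{k}$ and the appeal to Lemma~\ref{lem1} --- are comparatively routine.
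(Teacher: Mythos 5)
First, a point of reference: the paper does not prove this lemma at all --- it is imported verbatim as Theorem $5.1$ of Balog--Bessenrodt--Olsson--Ono \cite{bal} --- so there is no internal proof to compare yours against, and your proposal has to be judged against what a complete proof actually requires. Measured that way, your outline does follow the correct general strategy (essentially the one in \cite{bal}): every prime in $(h(1,1),n]$ divides $\chi^\lambda(1)$, so a prime-power degree forces $\lambda$ to be nearly a hook; and genuine hooks give $\chi^{(n-k,1^k)}(1)=\binom{n-1}{k}$, which is never a prime power for $2\le k\le n-3$ (e.g.\ by Kummer's theorem the exact power of $p$ in $\binom{n-1}{k}$ is at most $n-1$, while $\binom{n-1}{k}\ge\binom{n-1}{2}>n-1$). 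That part, and the finite check for small $n$, are sound.

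The genuine gap is the step you yourself flag. From ``$(H,n]$ contains at most one prime'' you cannot conclude that $H$ is close to $n$ using Lemma \ref{lem2}: Bertrand-type information gives one prime in each of $(n/2,n]$ and $(n/4,n/2]$, hence only $H>n/4$. Pushing $H$ to within a bounded (or slowly growing) distance of $n$ requires primes in genuinely short intervals, a deep analytic input whose effective range lies far beyond anything closed by your $n\le 21$ computation. As a result, the ``near-hook'' case is not a small residual family but an enormous class of partitions (everything with $\lambda_1+\lambda_1'-1>n/4$, say), and the ``finer analysis of the hook length formula'' meant to dispatch it is never carried out: the assertion that two of the linear factors must carry distinct large prime divisors surviving into $\chi(1)$ is a hope, not an argument, and it is exactly where the real work of \cite{bal} lies (a careful induction on the partition via removal of rows and columns combined with sharper prime-distribution estimates, not just Tschebyschef). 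So the proposal establishes the hook case and reduces the remainder to an unproven claim; it is an outline of the known proof rather than a proof.
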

We refer to \cite[$13.8,13.9$]{car85} for the classification of
unipotent characters and the notion of symbols.

\begin{lemma}\label{lem6} Let $S$ be a simple group of Lie type  in
characteristic $p$ defined over a field of size $q.$ Assume that
$S\neq L_2(q),{}^2F_4(2)'.$ Then there exist two irreducible
characters $\chi_i,i=1,2,$ of $S$ such that both $\chi_i$ extend to
$Aut(S)$ with $1<\chi_1(1)<\chi_2(1)$ and $\chi_2(1)=|S|_p.$ In
particular, if $G$ is an almost simple group with socle $S,$ where
$S\neq L_2(q),{}^2F_4(2)',$ then $|S|_p> d_1(G).$
\end{lemma}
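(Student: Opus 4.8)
The plan is to establish Lemma \ref{lem6} by a uniform argument based on the theory of unipotent characters, treating the Steinberg character as the larger of the two, and then invoking known results about extendibility of unipotent characters to the full automorphism group. Concretely, I would take $\chi_2$ to be the Steinberg character $\mathrm{St}_S$, which has degree $|S|_p$; it is well known (see for instance Carter \cite{car85}) that $\mathrm{St}_S$ extends to $\mathrm{Aut}(S)$, since it is fixed by all field, diagonal and graph automorphisms. For $\chi_1$ I would use the smallest nontrivial unipotent character of $S$, which in most families is a ``natural'' character of degree roughly $q^{d}$ for small $d$ (the reflection-type or minimal unipotent character); one then has $1<\chi_1(1)<|S|_p=\chi_2(1)$ provided $S$ is large enough, and these excluded small cases are exactly $L_2(q)$ and ${}^2F_4(2)'$ together with finitely many others that can be checked directly via \cite{atlas}. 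The key input for extendibility is Lusztig's result, as reformulated by Malle and others, that every unipotent character of $S$ extends to $\mathrm{Inn}(S).\langle F\rangle$ and in fact to $\mathrm{Aut}(S)$ except in a short explicit list of cases in types $B_2$, $G_2$, $F_4$ involving exceptional graph automorphisms — and in those cases one simply chooses a different unipotent character that is graph-stable.

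First I would dispose of the easy reduction: the ``in particular'' clause follows immediately from the main assertion, because if both $\chi_1,\chi_2\in\mathrm{Irr}(S)$ extend to $\mathrm{Aut}(S)$ then they restrict-extend to the almost simple group $G$ with $S\unlhd G\leq\mathrm{Aut}(S)$, so $G$ has irreducible characters of degrees $\chi_1(1)$ and $|S|_p=\chi_2(1)$; hence $d_1(G)\leq\chi_1(1)<|S|_p$. So the real content is producing $\chi_1$ and $\chi_2$ with the stated properties. Second, I would handle the exceptional groups of Lie type ($G_2$, $F_4$, $E_6$, ${}^2E_6$, $E_7$, $E_8$, ${}^3D_4$, ${}^2B_2$, ${}^2G_2$, ${}^2F_4$) by quoting their unipotent character degree tables from \cite{car85}: in each case one exhibits a small-degree unipotent character distinct from the trivial one that extends to $\mathrm{Aut}(S)$, using the fact that these groups have at most field and (for $E_6$) graph automorphisms, and the relevant small unipotent characters are stable under all of them; the group ${}^2F_4(2)'$ is excluded precisely because its automorphism story is anomalous. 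Third, for the classical groups $A_n(q)$ (with $n\geq 2$), $B_n(q), C_n(q)$ ($n\geq 2$), $D_n(q)$, ${}^2A_n(q)$, ${}^2D_n(q)$, I would use the combinatorics of symbols: the smallest nontrivial unipotent character corresponds to an explicit small symbol (giving a degree that is a product of cyclotomic-type factors, polynomial in $q$ of low degree), the Steinberg character corresponds to the ``staircase'' symbol and has degree $|S|_p=q^{N}$ where $N$ is the number of positive roots, and comparing degrees shows $1<\chi_1(1)<q^N$ for all $q$ in the allowed range; both symbols are visibly invariant under the field automorphism, and the diagonal and graph automorphisms permute unipotent characters trivially in the relevant cases, so both characters extend to $\mathrm{Aut}(S)$.

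The main obstacle I anticipate is the bookkeeping around which unipotent characters fail to extend to the full automorphism group — this is a genuine subtlety for the very small rank cases ($B_2(q)={}^2A_3$-adjacent phenomena, $G_2(q)$, $F_4(q)$ with their exceptional graph automorphisms when $q$ is a power of $2$ or $3$) where the graph automorphism can swap a pair of unipotent characters of the same degree, so that neither individual character is $\mathrm{Aut}(S)$-stable. The way around this is twofold: either pick $\chi_1$ among the unipotent characters that are provably graph-stable (there is always at least one besides $\mathrm{St}$ and $1_S$ once the rank is at least $2$, since the full set of unipotent degrees is large), or, if forced, pass to a sum that is stable and still of degree less than $|S|_p$ — though the cleaner route is simply to cite the precise extendibility theorem (Malle, or Lusztig) which delineates exactly the swapped pairs, and check by hand that excluding $L_2(q)$ and ${}^2F_4(2)'$ leaves enough room. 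A secondary, purely computational obstacle is verifying $\chi_1(1)<|S|_p$ for the handful of smallest groups in each family (e.g. ${}^2B_2(8)$, $G_2(3)$, $L_3(2)$, $U_3(3)$, $Sp_4(2)'$-type exclusions); these are finite in number and are dealt with by direct inspection of \cite{atlas}. Since the excerpt permits me to assume everything stated earlier, the proof as written will consist of (i) the reduction of the ``in particular'' clause, (ii) the choice of $\chi_2=\mathrm{St}_S$ and its extendibility, and (iii) a family-by-family selection of $\chi_1$ with a one-line degree comparison and a one-line extendibility remark, plus the finite-list check.
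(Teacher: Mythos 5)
Your proposal follows essentially the same route as the paper: take $\chi_2$ to be the Steinberg character, take $\chi_1$ to be a small unipotent character avoiding the explicit exceptional list of Malle's extendibility theorem (built on Lusztig's results), verify $1<\chi_1(1)<|S|_p$ family by family via the symbol/degree tables in \cite{car85}, and deduce the ``in particular'' clause by extending both characters to $G$. The only cosmetic difference is that the paper does not always pick the literally smallest unipotent character, just a convenient one outside the exceptional list, but the argument is the same.
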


\begin{proof} By the results of Lusztig \cite{Lusz}, any unipotent
character $\theta$  of $S$ has an extension $\tilde{\theta}$ to the
group $G_1$ of inner-diagonal automorphisms of $S$ such that
$\theta$ and $\tilde{\theta}$ have the same inertia group in
$Aut(S)$ (see \cite[Proposition $2.1$]{Malle08}). Moreover, the
unipotent characters of $G_1$ remain irreducible upon restriction to
$S,$ and these restrictions are all the unipotent characters of $S.$
By \cite[Theorem $2.4$]{Malle08}, all unipotent characters of $S$
extend to their inertia groups in $Aut(S).$ By results of Lusztig,
the inertia group of a unipotent character of $S$ is exactly
$Aut(S)$ except for several cases explicitly listed in \cite[Theorem
$2.5$]{Malle08}. Thus we can choose $\chi_2$ to be the Steinberg
character of $S$ and $\chi_1$ to be any unipotent character of $S$
such that $\chi_1$ does not appear in \cite[Theorem $2.5$]{Malle08}
and $1<\chi_1(1)<\chi_2(1)=|S|_p.$

Assume $S$ is of type $A_{n-1}$ with $n\geq 3.$ We have
$G_1=(A_{n-1})_{ad}(q)=PGL_n(q).$ By \cite[$13.8$]{car85}, the
unipotent characters of $G_1$ are parametrized by partitions of $n.$
Let $\alpha=(1,n-1).$  Then the degree of the unipotent character
$\chi^\alpha$ corresponding to $\alpha$ is given by
$\chi^\alpha(1)=(q^n-q)/(q-1).$  Since $St_S(1)=|S|_p=q^{n(n-1)/2},$
and $n\geq 3,$ we have $St_S(1)>\chi^\alpha(1)>1.$

Assume $S$ is of type ${}^2A_{n-1},$ where $n\geq 3.$ Then
$G_1=({}^2A_{n-1})_{ad}(q^2)=PU_n(q).$ By \cite[13.8]{car85}, the
unipotent characters of $G_1$ are again parametrized by partitions
of $n.$ Let $\alpha=(1,n-1).$ Then the degree of the unipotent
character $\chi^\alpha$ corresponding to $\alpha$ is given by
$\chi^\alpha(1)=(q^n+(-1)^nq)/(q+1).$  Since
$St_S(1)=|S|_p=q^{n(n-1)/2},$ and $n\geq 3,$ we have
$St_S(1)>\chi^\alpha(1)>1.$

Assume next that $S$ is of type $B_n,$ or $C_n$ where $n\geq 2$ and
$S\neq S_4(2).$ Then $G_1=(B_n)_{ad}(q)=SO_{2n+1}(q)$ or
$G_1=(C_n)_{ad}(q)=PCSp_{2n}(q).$ By \cite[13.8]{car85}, $G_1$ has a
unipotent characters $\chi^\alpha$ labeled by the symbol
$$\alpha=\begin{pmatrix}
      0 & 1 & n  \\
       \: & - &\:
\end{pmatrix}$$

with $\chi^\alpha(1)=(q^n-1)(q^n-q)/(2(q+1)).$ Since $|S|_p=q^{n^2}$
and $(n,q)\neq (2,2),$ we see that $|S|_p>\chi^\alpha(1)>1.$

Assume $S$ is of type $D_n(q),n\geq 4.$ Then
$G_1=(D_n)_{ad}(q)=P(CO_{2n}(q)^0).$ By \cite[13.8]{car85}, $G_1$
has a unipotent character $\chi^\alpha$ labeled by the symbol
$$\alpha=\begin{pmatrix}
      n-1  \\
      1
\end{pmatrix}$$ with $\chi^\alpha(1)=(q^n-1)(q^{n-1}+q)/(q^2-1).$
Since $|S|_p=q^{n(n-1)},$ we see that $|S|_p>\chi^\alpha(1)>1.$

Assume $S$ is of type ${}^2D_n(q^2),n\geq 4.$ Then
$G_1=({}^2D_n)_{ad}(q^2)=P(CO_{2n}^-(q)^0)$ and $G_1$ has a
unipotent character $\chi^\alpha$  labeled by the symbol
$$\alpha=\begin{pmatrix}
      1 &&n-1  \\
      & - &
\end{pmatrix}$$ with $\chi^\alpha(1)=(q^n+1)(q^{n-1}-q)/(q^2-1).$
Since $|S|_p=q^{n(n-1)},$  $|S|_p>\chi^\alpha(1)>1.$

For the simple groups of exceptional type, we will use the explicit
list of unipotent characters in \cite[$13.9$]{car85}.

Assume $S$ is of type $G_2(q).$  Then $S$ has a unipotent character
labeled by $\theta_{2,1}$ with degree $q\Phi_2^2\Phi_3/6.$ As
$G_2(2)\cong U_3(3).2$ is not simple, we can assume that $q\geq 3.$
Since $|S|_p=q^6,$ we have $q\Phi_2^2\Phi_3/6<q^6$ so that
$1<\theta_{2,1}(1)<|S|_p.$

Assume $S$ is of type ${}^3D_4(q^3).$
Then $S$ has a unipotent character labeled by $\theta_{1,3'}$ with
degree $q\Phi_{12}.$  Since $|S|_p=q^{12},$ we have
$1<\theta_{1,3'}(1)<|S|_p.$

Assume $S$ is of type $F_4(q).$ Then $S$ has a unipotent character
labeled by $\theta_{9,2}$ with degree
$q^2\Phi_3^2\Phi_6^2\Phi_{12}.$ Since $|S|_p=q^{24},$ we have
$1<\theta_{9,2}(1)<|S|_p.$

Assume $S$ is of type $E_6(q).$  Then $S$ has a unipotent character
labeled by $\theta_{6,1}$ with degree $q\Phi_8\Phi_9.$ Since
$|S|_p=q^{36},$ we have $1<\theta_{6,1}(1)<|S|_p.$

Assume $S$ is of type ${}^2E_6(q^2).$ Then $S$ has a unipotent
character labeled by $\theta_{2,4'}$ with degree $q\Phi_8\Phi_{18}.$
Since $|S|_p=q^{36},$ we have $1<\theta_{2,4'}(1)<|S|_p.$

Assume $S$ is of type $E_7(q).$  Then $S$ has a unipotent character
labeled by $\theta_{7,1}$ with degree $q\Phi_7\Phi_{12}\Phi_{14}.$
Since $|S|_p=q^{63},$ we have $1<\theta_{7,1}(1)<|S|_p.$

Assume $S$ is of type $E_8(q).$  Then $S$ has a unipotent character
labeled by $\theta_{8,1}$ with degree
$q\Phi_4^2\Phi_8\Phi_{12}\Phi_{20}\Phi_{24}.$ Since $|S|_p=q^{120},$
we have $1<\theta_{8,1}(1)<|S|_p.$

Assume $S$ is of type ${}^2B_2(q^2),$ where $q^2=2^{2m+1}$ and
$m\geq 1.$  Then $S$ has a unipotent character labeled by
${}^2B_2[a]$ with degree $q\Phi_1\Phi_2/\sqrt{2}.$ Since
$|S|_p=q^{4},$ we have $1<{}^2B_2[a](1)<|S|_p.$

Assume $S$ is of type ${}^2G_2(q^2),$ where $q^2=3^{2m+1}$ and
$m\geq 1.$  Then $S$ has a unipotent character $\theta$ with degree
$q\Phi_1\Phi_2\Phi_4/\sqrt{3}.$ Since $|S|_p=q^{6},$ we have
$1<\theta(1)<|S|_p.$

Assume $S$ is of type ${}^2F_4(q^2),$ where $q^2=2^{2m+1}$ and
$m\geq 1.$  Then $S$ has a unipotent character labeled by
${}^2B_2[a]$ with degree $q\Phi_1\Phi_2\Phi_4^2\Phi_6/\sqrt{2}.$
Since $|S|_p=q^{24},$ we have $1<{}^2B_2[a](1)<|S|_p.$ This finishes
the proof of the first assertion.

Now assume $G$ is an almost simple group with socle $S,$ where
$S\neq L_2(q),{}^2F_4(2)'.$ Let $\chi_i\in Irr(S),i=1,2,$ be
irreducible characters of $S$ obtained above. As both $\chi_i$
extend to $Aut(S)$ and $S\unlhd G\leq Aut(S),$ we deduce that each
$\chi_i$ extends to $\psi_i\in Irr(G)$ with $\psi_i(1)=\chi_i(1)$
for $i=1,2.$ Thus $\psi_2(1)=|S|_p>\psi_1(1)=\chi_1(1)>1$ so that
$|S|_p>d_1(G)$ as required. The proof is now complete.
\end{proof}

\begin{lemma}\label{lem8} Let $G$ be an almost simple group with
socle $S=L_2(q),$ where $q=p^f\geq 7.$ If $p\neq 3,$ then $G$
contains an irreducible character of degree $q+\delta$ where
$q\equiv \delta\mbox{\:\emph{(mod $3$)}}.$ If $p=3,$ then $G$
contains an irreducible character of degree $(q+\epsilon)/2$ or
$q+\epsilon,$ where $q\equiv \epsilon\mbox{\:\emph{(mod $4$)}}.$
\end{lemma}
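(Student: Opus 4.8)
The plan is to work directly from the well-known character tables of $L_2(q)$ and its almost simple overgroups, tracking which small-degree characters of the socle extend to or fuse in $G$. Recall that for $q = p^f \geq 7$ the group $L_2(q)$ has irreducible characters of degree $q+1$ (the principal-series characters, indexed by nontrivial characters of a split torus, coming in pairs $\psi \leftrightarrow \bar\psi$ of the same degree), degree $q-1$ (the cuspidal/discrete-series characters, likewise in conjugate pairs), degree $q$ (the Steinberg character), and two characters of degree $(q\pm1)/2$ when the corresponding parity condition on $q$ holds (these are the two constituents of a reducible principal-series or discrete-series character, swapped by the diagonal outer automorphism, and each is fixed by the field automorphisms). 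The outer automorphism group of $L_2(q)$ is generated by the diagonal automorphism $\delta$ (of order $\gcd(2,q-1)$) and the field automorphism of order $f$; so $G/S$ embeds in $\langle \delta \rangle \rtimes C_f$.

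First I would dispose of the case $p \neq 3$. Here I claim $G$ has an irreducible character of degree $q+\delta$ with $q \equiv \delta \pmod 3$, i.e.\ of degree $q+1$ if $3 \nmid (q+1)$ and of degree $q-1$ if $3 \mid (q+1)$ (equivalently $q \equiv -1 \pmod 3$, so $q \equiv 1 \pmod 3$ gives $\delta = +1$). The point is a counting/orbit argument: the diagonal automorphism permutes the $(q-1)/\gcd(2,q-1)$ conjugate pairs of degree-$(q+1)$ characters and the $(q+1)/\gcd(2,q-1)$ pairs of degree-$(q-1)$ characters, while a field automorphism of order $d$ has orbits whose sizes divide $d$. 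To produce a character of $G$ of the target degree it suffices to find a single $\mathrm{Aut}(S)$-stable, hence $G$-stable, character of that degree, or more cheaply an orbit on which the stabilizer's two-cocycle is trivial so that extension to $G$ is possible (Clifford theory; here all the relevant stabilizers are cyclic-by-something with trivial Schur multiplier contribution, so extension always works). The arithmetic input is that exactly one of $q-1$, $q+1$ is divisible by $3$ (since $p \neq 3$ forces $3 \mid q^2-1$), and the corresponding family has a character fixed by the full automorphism group — concretely, if $3 \mid q+1$ then among the discrete-series characters there is one (or a short orbit) fixed by field automorphisms, giving degree $q-1 = q + \delta$ with $\delta \equiv q \pmod 3$; symmetrically if $3 \mid q-1$.

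For $p = 3$, the group $L_2(q)$ additionally has the two characters of degree $(q+\epsilon)/2$ where $q \equiv \epsilon \pmod 4$ (these exist precisely because $q$ is odd). These two characters are interchanged by the diagonal automorphism $\delta$ and are each fixed by every field automorphism. Hence in $G$: if $G$ does not induce the diagonal automorphism then each of them is $G$-invariant and extends to an irreducible character of $G$ of degree $(q+\epsilon)/2$; if $G$ does induce $\delta$, then the two of them form a single orbit of size $2$, and the induced character from the inertia subgroup has degree $q + \epsilon$. Either way $G$ has an irreducible character of degree $(q+\epsilon)/2$ or $q+\epsilon$, as required. The main obstacle I anticipate is purely bookkeeping: one must be careful that in the $p=3$ case the "other" family (degree $q\pm1$) does not accidentally give a better/worse conclusion, and that the parity conventions ($\delta \bmod 3$ versus $\epsilon \bmod 4$) are matched to the correct family; and one should double-check the genuinely small cases $q = 7, 8, 9$ against the Atlas, since there $L_2(q)$ has exceptional isomorphisms ($L_2(7) \cong L_3(2)$, $L_2(9) \cong A_6$) whose automorphism towers I would verify by hand.
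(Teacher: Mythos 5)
Your treatment of the case $p=3$ is correct and is essentially the paper's argument: the two characters $\chi^{\pm}$ of degree $(q+\epsilon)/2$ are fused by the diagonal automorphism and fixed by the field automorphisms, so either both are $G$-invariant and extend (the inertia quotient in $G$ being cyclic), or they form an orbit of length $2$ and inducing an extension from the inertia subgroup gives a character of degree $q+\epsilon$. Your dichotomy ``$G$ induces a diagonal automorphism or not'' is if anything cleaner than the paper's case division into $G\leq PGL_2(q)$, $G\leq P\Gamma L_2(q)$ and $PGL_2(q)\unlhd G$, which needs care for subgroups of the form $L_2(q).\la\delta\phi\ra$ with $\delta$ diagonal and $\phi$ a field automorphism.

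The case $p\neq 3$ is where you have a genuine gap. The paper settles it by citing the proof of \cite[Proposition $3.7$]{MM}: the semisimple character of $S$ attached to a semisimple element $s$ of order $3$ in the dual group $SL_2(q)$ has degree $q+\delta$ with $q\equiv\delta\pmod 3$ (the centralizer of $s$ is the split or nonsplit torus according as $3\mid q-1$ or $3\mid q+1$), and that specific character is shown there to be extendible to $Aut(S)$. The reason this works is that the order-$3$ subgroup of the relevant cyclic torus is unique, so the class of $s$ is canonical up to inversion and the attached character is invariant under the whole automorphism group. You instead assert that ``the corresponding family has a character fixed by the full automorphism group --- concretely \dots one (or a short orbit) fixed by field automorphisms,'' and this is where the proof is missing: (i) invariance under field automorphisms alone does not handle the diagonal automorphism, which $G$ may well induce when $q$ is odd; (ii) the ``short orbit'' fallback cannot deliver the required degree, since an orbit of length $t>1$ produces characters of $G$ of degree at least $t(q+\delta)$, not $q+\delta$; and (iii) even for a fully invariant character, extendibility to $G$ when $G/S\leq C_2\times C_f$ is noncyclic is not automatic, and the remark about ``trivial Schur multiplier contribution'' is an assertion, not an argument. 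These three points are precisely what the identification of the order-$3$ semisimple character and the citation of \cite{MM} supply; without them the $p\neq 3$ half of the lemma is not established.
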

\begin{proof} Assume first that $p\neq 3.$ It follows from the proof
of \cite[Proposition $3.7$]{MM} that the irreducible character  of
$S$ corresponding to a semisimple element of order $3$ in the dual
group $SL_2(q),$  of degree $q+\delta,$ where $q\equiv
\delta\mbox{\:{(mod $3$)}},$ is extendible to $Aut(S),$ and hence
$G$ contains an irreducible character of degree $q+\delta$ as
required. Note that this result fails for $L_2(3^f).$ Now we assume
that $q=3^f.$ Observe that $L_2(q)$ always contains irreducible
characters $\chi_a,\chi_b$ of degree $q-1$ and $q+1,$ respectively,
which are extendible to $PGL_2(q).$ Thus if $L_2(q)\unlhd G\leq
PGL_2(q)$ then $G$ possesses characters of degree $q\pm 1.$ Now
assume that $G\not\leq PGL_2(q).$ Recall that the only outer
automorphisms of $L_2(q)$ are the diagonal automorphisms and the
field automorphisms. It is well-known that $S$ contains two
irreducible characters $\chi^{\pm}$ of degree $(q+\epsilon)/2,$
where $q\equiv \epsilon \mbox{\:{(mod $4$)}}.$ Now the diagonal
automorphisms of $S$ fuse these two irreducible characters while the
field automorphisms fix those two. Let $\theta\in
\{\chi^+,\chi^-\}.$ Then $\theta\in Irr(S)$ and
$I_{Aut(S)}(\theta)=P\Gamma L_2(q).$ Thus if $S\unlhd G\leq P\Gamma
L_2(q),$ then $\theta$ is $G$-invariant and so $\theta$ extends to
$G$ as $G/S$ is cyclic. Hence $G$ has an irreducible character of
degree $(q+\epsilon)/2.$ Finally, assume $PGL_2(q)\unlhd G\leq
Aut(L_2(q)).$ Then the irreducible character $\mu$ of $PGL_2(q)$
lying over $\theta$ is of degree $q+\epsilon.$ We see that $\mu$ is
$G$-invariant and hence it extends to $G,$ as $G/PGL_2(q)$ is
cyclic. Therefore $G$ contains an irreducible character of degree
$q+\epsilon.$ The proof is now complete.
\end{proof}

\begin{table}
 \begin{center}
  \caption{Sporadic simple groups and their automorphism groups}\label{Tab1}
  \begin{tabular}{c|c|c|c|c}
   \hline
   $S$  & $p(S)$ & $d_1(S)$&$d_2(S)$&$d_3(S)$\\ \hline\hline
   $M_{11}$&$11$&$10$&$11$&$16$\\\hline
   $M_{12}$&$11$&$11$&$16$&$45$\\\hline
   $M_{12}.2$&$11$&$22$&$32$&$45$\\\hline
   $J_1$&$19$&$56$&$76$&$77$\\\hline
   $M_{22}$&$11$&$21$&$45$&$55$\\\hline
   $M_{22}.2$&$11$&$21$&$45$&$55$\\\hline
   $J_2$&$7$&$14$&$21$&$36$\\\hline
   $J_2.2$&$7$&$28$&$36$&$42$\\\hline
   $M_{23}$&$23$&$22$&$45$&$230$\\\hline
   $HS$&$11$&$22$&$77$&$154$\\\hline
   $HS.2$&$11$&$22$&$77$&$154$\\\hline
   $J_3$&$19$&$85$&$323$&$324$\\\hline
   $J_3.2$&$19$&$170$&$324$&$646$\\\hline
   $M_{24}$&$23$&$23$&$45$&$231$\\\hline
   $McL$&$11$&$22$&$231$&$252$\\\hline
   $McL.2$&$11$&$22$&$231$&$252$\\\hline
   $He$&$17$&$51$&$153$&$680$\\\hline
   $He.2$&$17$&$102$&$306$&$680$\\\hline
   $Ru$&$29$&$378$&$406$&$783$\\\hline
   $Suz$&$13$&$143$&$364$&$780$\\\hline
    $Suz.2$&$13$&$143$&$364$&$780$\\\hline
   $O'N$&$31$&$10944$&$13376$&$25916$\\\hline
    $O'N.2$&$31$&$10944$&$26752$&$37696$\\\hline
   $Co_3$&$23$&$23$&$253$&$275$\\\hline
   $Co_2$&$23$&$23$&$253$&$275$\\\hline
   $Fi_{22}$&$13$&$78$&$429$&$1001$\\\hline
    $Fi_{22}.2$&$13$&$78$&$429$&$1001$\\\hline
   $HN$&$19$&$133$&$760$&$3344$\\\hline
   $HN.2$&$19$&$266$&$760$&$3344$\\\hline
   $Ly$&$67$&$2480$&$45694$&$48174$\\\hline
   $Th$&$31$&$248$&$4123$&$27000$\\\hline
   $Fi_{23}$&$23$&$782$&$3588$&$5083$\\\hline
   $Co_1$&$23$&$276$&$299$&$1771$\\\hline
   $J_4$&$43$&$1333$&$299367$&$887778$\\\hline
   $Fi_{24}'$&$29$&$8671$&$57477$&$249458$\\\hline
   $Fi_{24}'.2$&$29$&$8671$&$57477$&$249458$\\\hline
   $B$&$47$&$4371$&$96255$&$1139374$\\\hline
   $M$&$71$&$196883$&$21296876$&$842609326$\\\hline
   ${}^2F_4(2)'$&$13$&$26$&$27$&$78$\\\hline
   ${}^2F_4(2)'.2$&$13$&$27$&$52$&$78$\\\hline
  \end{tabular}

 \end{center}
\end{table}

\begin{corollary}\label{cor3} If $G$ is an almost simple group then $\rho(G)=\pi(G).$
\end{corollary}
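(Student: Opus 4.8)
The inclusion $\rho(G)\subseteq\pi(G)$ holds for every finite group, since every $\chi(1)$ divides $|G|$. For the reverse inclusion the plan is to invoke the It\^o--Michler theorem: for a finite group $H$ and a prime $p$, one has $p\notin\rho(H)$ if and only if $H$ has a normal abelian Sylow $p$-subgroup. Let $S$ be the socle of $G$; since $G$ is almost simple, $S$ is a non-abelian simple group with $S\unlhd G\leq Aut(S)$ and $C_G(S)=1$.

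Now fix $p\in\pi(G)$ and suppose, for a contradiction, that $p\notin\rho(G)$. By It\^o--Michler, $G$ has a normal abelian Sylow $p$-subgroup $P$, and $P\neq 1$ because $p$ divides $|G|$. Since $P$ and $S$ are both normal in $G$ we have $[P,S]\leq P\cap S$, and $P\cap S\unlhd S$ forces $P\cap S\in\{1,S\}$. The possibility $P\cap S=S$ is excluded, since it would make the non-abelian group $S$ a subgroup of the abelian group $P$. Hence $P\cap S=1$, so $[P,S]=1$; that is, $P\leq C_G(S)=1$, contradicting $P\neq 1$. Therefore $p\in\rho(G)$, and $\rho(G)=\pi(G)$.

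I do not anticipate a real obstacle here: the only ingredients are the definitional fact that $C_G(S)=1$ for an almost simple group with socle $S$, the elementary containment $[P,S]\leq P\cap S$ for subgroups $P,S$ both normal in $G$, and the It\^o--Michler theorem (which, like the rest of the paper, ultimately relies on the classification of finite simple groups). If one preferred to avoid quoting It\^o--Michler, the alternative is a case analysis. First show $\pi(S)\subseteq\rho(S)$ according to the isomorphism type of $S$: for $S$ of Lie type, cover the defining characteristic by the Steinberg character of degree $|S|_p$ provided by Lemma \ref{lem6}, and the remaining primes by cross-characteristic (e.g.\ semisimple) character degrees, with Lemma \ref{lem8} handling $S=L_2(q)$; dispose of the sporadic groups and ${}^2F_4(2)'$ via Table \ref{Tab1} and \cite{atlas}, and of $S=A_n$ via Lemma \ref{lem1} and \cite{Hung}. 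Next, by Clifford's theorem every $\theta\in Irr(S)$ divides $\psi(1)$ for some $\psi\in Irr(G)$, so $\pi(S)\subseteq\rho(G)$. Finally, treat a prime $p\mid[G:S]$ with $p\nmid|S|$ by choosing $\theta\in Irr(S)$ moved by an element of order $p$ in $G/S$ --- possible since $Out(S)$ acts faithfully on the conjugacy classes of $S$ --- for which any $\psi\in Irr(G)$ lying over $\theta$ satisfies $p\mid[G:I_G(\theta)]\mid\psi(1)$. In this longer route the only genuine work is the cross-characteristic primes for the exceptional groups of Lie type, which is exactly what the It\^o--Michler argument bypasses.
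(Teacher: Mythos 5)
Your proof is correct and follows essentially the same route as the paper: the forward inclusion from $\chi(1)\mid |G|$ and the reverse inclusion via the It\^o--Michler theorem applied to the fact that an almost simple group has no nontrivial normal abelian Sylow subgroup. The only difference is that you supply the short verification of that last fact (via $P\cap S=1$ and $C_G(S)=1$), which the paper simply asserts.
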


\begin{proof} Observe first that for any $\chi\in Irr(G),$ we have $\chi(1)$ divides $|G|$ by
\cite[Theorem $3.11$]{Isaacs}. Hence $\rho(G)\subseteq \pi(G).$ As
$G$ is almost simple, it has no normal abelian Sylow $p$-subgroup,
so that by the Ito-Michler Theorem \cite{Mich}, every prime divisor
of $G$ must divide $\chi(1)$ for some $\chi\in Irr(G),$ and thus
$\pi(G)\subseteq \rho(G).$ Hence $\rho(G)=\pi(G)$ as required.
\end{proof}

\begin{lemma}\label{lem7} Let $G$ and $H$ be groups. Suppose that $cd(G)\subseteq cd(H).$ Then

$(i)$ $d_i(G)\geq d_i(H),$ for all $i\geq 1;$

$(ii)$ If $G$ is almost simple then $\pi(G)\subseteq \pi(H).$
\end{lemma}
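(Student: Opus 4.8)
The plan is to dispatch the two parts independently, each collapsing to an elementary counting argument once the indexing conventions and Corollary~\ref{cor3} are in hand.

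For part $(i)$, I would write $cd(G)=\{1=s_0<s_1<\cdots<s_t\}$ and $cd(H)=\{1=r_0<r_1<\cdots<r_u\}$, so that by definition $d_j(G)=s_j$ and $d_j(H)=r_j$. Fix $i$ with $1\le i\le t$. The hypothesis $cd(G)\subseteq cd(H)$ forces the $i$ distinct degrees $s_1<s_2<\cdots<s_i$ to lie in $cd(H)$, and since each exceeds $1=r_0$ they all lie among $r_1,\dots,r_u$. Thus the half-open interval $(1,s_i]$ already contains the $i$ elements $s_1,\dots,s_i$ of $cd(H)$, which forces $s_i\ge r_i$; that is, $d_i(G)\ge d_i(H)$. (Since $cd(G)\subseteq cd(H)$ gives $t\le u$, whenever $d_i(G)$ is defined so is $d_i(H)$, so the assertion is meaningful for every $i$ in the stated range.)

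For part $(ii)$, I would note that $\rho(G)=\cup_{\chi\in Irr(G)}\pi(\chi(1))$ is, by definition, exactly the set of primes dividing some element of $cd(G)$; hence $\rho$ is a function of the degree set alone and $cd(G)\subseteq cd(H)$ yields $\rho(G)\subseteq\rho(H)$ at once. On the other hand every irreducible character degree divides the group order (\cite[Theorem~3.11]{Isaacs}), so $\rho(H)\subseteq\pi(H)$ always holds, while Corollary~\ref{cor3} supplies $\rho(G)=\pi(G)$ because $G$ is almost simple. Concatenating, $\pi(G)=\rho(G)\subseteq\rho(H)\subseteq\pi(H)$, as required.

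I do not anticipate a genuine obstacle: both parts are bookkeeping. If anything needs watching, it is reading $d_i$ as an enumeration of \emph{distinct} character degrees rather than degrees-with-multiplicity in part $(i)$, and the fact that part $(ii)$ genuinely uses the almost-simplicity of $G$ (through the Ito--Michler input already exploited in Corollary~\ref{cor3}), not merely the degree inclusion.
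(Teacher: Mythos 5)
Your proof is correct and follows the same route as the paper: part $(i)$ is exactly the counting argument the paper dismisses as obvious, and part $(ii)$ is the paper's own chain $\pi(G)=\rho(G)\subseteq\rho(H)\subseteq\pi(H)$ via Corollary~\ref{cor3}.
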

\begin{proof} $(i)$ is obvious. $(ii)$ follows from Corollary
\ref{cor3} as $\rho(G)\subseteq \rho(H)\subseteq \pi(H).$
\end{proof}

\section{Proofs of the main results}

%--------------------------------------------------------------------------------------------------------------------------
\begin{theorem}\label{th1} Let $G$ be an almost simple group with socle $S$ and let $n\geq 5$ be an integer.
If $cd(G)\subseteq cd(S_n)$ then $S\cong A_n.$
\end{theorem}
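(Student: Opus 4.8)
The plan is to go through all possible almost simple groups $G$ and rule out every socle $S$ except $A_n$, using the constraint $cd(G)\subseteq cd(S_n)$ together with the known small character degrees of $S_n$ from Lemma~\ref{lem1} and the arithmetic machinery of Lemmas~\ref{lem6}, \ref{lem8} and Corollary~\ref{cor3}. The key numerical levers are: first, by Lemma~\ref{lem7}(i), every degree of $G$ is $\geq d_1(S_n)=n-1$ unless it is $1$, and more refined information about $d_2(S_n)=n(n-3)/2$, etc.; second, by Lemma~\ref{lem7}(ii), $\pi(G)\subseteq\pi(S_n)=\pi(n!)$, so $p(S)\leq n$; third, by Lemma~\ref{lem6} (when $S$ is of Lie type and $S\neq L_2(q),{}^2F_4(2)'$), we get $|S|_p>d_1(G)\geq n-1$, and on the other hand $|S|_p$ must itself be a degree of $S_n$ (it extends to $Aut(S)$, hence lies in $cd(G)\subseteq cd(S_n)$), so $|S|_p$ is subject to Lemma~\ref{lem3} on prime-power degrees of $S_n$. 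Combining $|S|_p=p^d$ a prime power in $cd(S_n)$ with $p^d>n-1$ forces, by Lemma~\ref{lem3}, essentially $n=p^d+1$ (the small exceptional cases $n\in\{4,5,6,8,9\}$ being handled by hand or excluded by $n\geq 5$), which is extremely restrictive.

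First I would dispose of the alternating and symmetric socles directly: if $S=A_m$ with $m\neq 6$, then $Aut(A_m)=S_m$, and I would show that $cd(S_m)\subseteq cd(S_n)$ (or even $cd(A_m)\subseteq cd(S_n)$) forces $m=n$ by comparing the smallest few degrees via Lemma~\ref{lem1} — e.g.\ $d_1$ gives $m-1=n-1$ up to the known small-$n$ ambiguities, with $m=6$ and tiny $m$ checked against \cite{atlas}. Then I would turn to sporadic groups: Table~\ref{Tab1} lists $p(S)$ and $d_1(S),d_2(S),d_3(S)$ for each sporadic $S$ and its automorphic extensions; for each such $G$, $\pi(G)\subseteq\pi(n!)$ gives $n\geq p(S)$, hence $d_1(S_n)=n-1\geq p(S)-1$ and $d_2(S_n)=n(n-3)/2$ is already large, while $d_1(G)$ from the table is comparatively small, producing a contradiction in essentially every case (the delicate ones, where $d_1(S)$ is close to $p(S)$, are finished by also using $d_2,d_3$ or the prime-power degree classification). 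This is a finite check.

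The bulk of the work is the groups of Lie type. For $S\neq L_2(q),{}^2F_4(2)'$, I would argue as follows: set $q=p^f$ and let $p^d=|S|_p$. Since the Steinberg character and one smaller unipotent character both extend to $Aut(S)$ (Lemma~\ref{lem6}), $p^d\in cd(G)\subseteq cd(S_n)$ and $p^d>d_1(G)\geq n-1$; by Lemma~\ref{lem3} this forces $n=p^d+1$ (after discarding the listed exceptions, which involve only $n\leq 9$). But then $n-1=p^d=|S|_p$ is a very large power of a single prime, whereas $n! = (p^d+1)!$ has many prime factors and, crucially, the exact power of $p$ dividing $n!$ is $\sum_{i\geq 1}\lfloor (p^d+1)/p^i\rfloor$, which is far larger than $d$; meanwhile $|S|$ itself divides $n!$ (since $|G|$'s prime set is contained in $\pi(n!)$ and, more strongly, one can compare $p$-parts or just orders). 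I would derive a contradiction from the mismatch between $|S|_p$ and $|S_n|_p$, or alternatively from the fact that $cd(S_n)$ contains $n-1=p^d$ as its smallest nontrivial degree but Lemma~\ref{lem6} also forces an intermediate degree $\chi_1(1)$ with $n-1<\chi_1(1)<p^d=n-1$, an outright impossibility once one notes $d_1(G)=n-1$ would have to be realized inside $S$ as well. The remaining Lie type cases $S=L_2(q)$ and $S={}^2F_4(2)'$ are handled separately: for ${}^2F_4(2)'$ the degrees are in Table~\ref{Tab1} and $p(S)=13$ forces $n\geq 13$, a quick contradiction; for $S=L_2(q)$ with $q=p^f\geq 7$, Lemma~\ref{lem8} supplies an explicit degree of $G$ of the form $q\pm 1$ or $(q\pm 1)/2$ (with the stated congruences), and since also $q=|S|_p\in cd(G)\subseteq cd(S_n)$ when the Steinberg degree is a prime power, one pins down $n$ relative to $q$ and then contradicts the structure of $cd(S_n)$ via Lemma~\ref{lem1}; the tiny cases $q\in\{4,5,9\}$ give $L_2(4)\cong L_2(5)\cong A_5$ and $L_2(9)\cong A_6$, which are accounted for on the alternating side, and $q=7,8$ are checked directly.

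The main obstacle I anticipate is the $L_2(q)$ analysis: unlike the generic Lie type groups, $L_2(q)$ has a small, sparse degree set ($\{1,q-1,q,q+1,(q\pm1)/2\}$ roughly), so the crude ``$|S|_p>d_1$ forces $n=p^d+1$'' argument via Lemma~\ref{lem3} is available but one must carefully use Lemma~\ref{lem8} to get a \emph{second} constraint — a degree of size about $q$ that is \emph{not} a prime power — and then reconcile ``$n\approx q$'' with the precise values $d_1(S_n)=n-1$, $d_2(S_n)=n(n-3)/2$, $d_3(S_n)=(n-1)(n-2)/2$ from Lemma~\ref{lem1}, noticing that a degree equal to $q\pm 1\approx n$ cannot sit strictly between $d_1(S_n)=n-1$ and $d_2(S_n)$ unless it \emph{is} $n-1$, which then forces congruence conditions that fail for all but finitely many $q$, to be cleared by hand. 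Bounding $p(S)$ below by something close to $n$ (via $\pi(G)\subseteq\pi(n!)$ and Lemma~\ref{lem2} / Bertrand) is the tool that makes all of these finiteness reductions terminate.
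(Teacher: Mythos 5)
Your plan is essentially the paper's own proof: it splits into the same three cases (Lie type, sporadic/Tits, alternating) and uses the same levers in each — the Steinberg character together with Lemma~\ref{lem3} and Lemma~\ref{lem6} to force the contradiction $|S|_p=n-1=d_1(S_n)<|S|_p$ for generic Lie type, Lemma~\ref{lem8} plus the gap between $d_1(S_n)$ and $d_2(S_n)$ for $L_2(q)$, Table~\ref{Tab1} with $\pi(G)\subseteq\pi(n!)$ for the finite sporadic check, and the degree $m-1$ combined with Bertrand's postulate (Lemma~\ref{lem2}) to pin down $m=n$ in the alternating case. The sketch is correct in outline and matches the paper's argument.
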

\begin{proof}  Using the classification of finite simple groups, $S$
is an alternating group of degree at least $5,$ a finite simple
group of Lie type or one of the $26$ sporadic groups. We will treat
the Tits group as a sporadic group rather than a group of Lie type.

{\bf Step $1.$} Eliminate simple groups of Lie type. By way of
contradiction, we assume that $S$ is a simple group of Lie type in
characteristic $p$ and $cd(G)\subseteq cd(S_n)$ with $n\geq 5.$ By
the isomorphisms $L_2(4)\cong L_2(5)\cong A_5, L_2(9)\cong A_6$ and
$L_4(2)\cong A_8,$ we can assume that $S$ is not one of the groups
listed above nor the Tits group. It is well known that the Steinberg
character of $S$ of degree $|S|_p$ extends to $\chi\in Irr(G)$ and
hence $\chi(1)=|S|_p$ is a non-trivial power of $p.$ Assume first
that $|S|_p$ is not the minimal character degree of $S_n,$ that is
$|S|_p>n-1.$ It follows from Lemma \ref{lem3} that $n\in
\{5,6,8,9\}$ and $|S|_p=5,|S|_p\in \{3^2,2^3\},|S|_p=2^6,|S|_p=3^3,$
respectively. It is routine to check that these cases cannot happen.
Thus $|S|_p=n-1=d_1(S_n).$ Now Lemma \ref{lem6} will provide a
contradiction unless $S=L_2(q),$ where $q=p^f\geq 4.$ Assume that
$S=L_2(q)$ and thus $q\geq 7.$ As $|S|_p=q=d_1(S_n),$ by Lemma
\ref{lem8}, $G$ must contain an irreducible character of degree
$q+1$ and hence $q+1\in cd(S_n).$ Since $S\neq L_4(2)\cong A_8,$ we
have $d_2(S_n)=n(n-3)/2.$ We have $n-1=q$ and hence as $q\geq 7,$ we
obtain $d_2(S_n)=n(n-3)/2=(q+1)(q-2)/2>q+1>q=d_1(S_n),$ which
contradicts Lemma \ref{lem7}$(i).$ This finishes the proof of Step
$1.$

{\bf Step $2.$}  Eliminate sporadic simple groups and the Tits
group. By way of contradiction, we assume that $S$ is a simple
sporadic group or the Tits group and that $cd(G)\subseteq cd(S_n)$
with $n\geq 5.$ The character degrees of $S_n,$ where $5\leq n\leq
31$ can be found in \cite{GAP}. Moreover the character degrees of
sporadic simple groups and the Tits group together with their
automorphism groups are also available in \cite{GAP}. It is routine
to check that $cd(G)\nsubseteq cd(S_n)$ for any $5\leq n\leq 31$ and
any almost simple group $G$ with socle $S,$ where $S$ is a sporadic
simple group or the Tits group. Thus we can assume that $n\geq 32.$
It follows from Lemma \ref{lem1} that $d_2(S_n)=n(n-3)/2\geq
d_2(S_{32})=464.$ By Lemma \ref{lem7}$(i)$ and Table \ref{Tab1}, we
only need to consider the following cases: $S\in \{O'N, HN, Ly, Th,
Fi_{23},J_4,Fi_{24}',B,M\}.$

$(1)$ $S=O'N.$ In this case, we have $|Out(S)|=2$ so that either
$G=S$ or $G=S.2.$ Assume first that $G=S=O'N.$ Then $d_9(O'N)=58653$
and since $n\geq 32,$ by Lemma \ref{lem1}, $d_9(S_n)\geq 62496>
d_9(O'N),$ which contradicts Lemma \ref{lem7}$(i).$ Now assume
$G=O'N.2.$ Then $d_7(G)=58653<62496\leq d_9(S_n)$ so that $d_7(G)\in
\{d_7(S_n),d_8(S_n)\}.$ However we can check that these equations
cannot hold for any $n\geq 32.$ Thus $cd(G)\nsubseteq cd(S_n).$

$(2)$ $S=HN.$ Then $|Out(S)|=2$ so that $G=S$ or $G=S.2.$ From
\cite{atlas}, we have $d_7(S)=16929$ and $d_7(S.2)=17556.$ Observe
that $d_7(G)<62496\leq d_7(S_n)$ so that $cd(G)\nsubseteq cd(S_n)$
by Lemma \ref{lem7}$(i).$

$(3)$ $S=Ly.$ Since $|Out(S)|=1,$ we have $G=S$ so that $p(S)=67\in
\pi(S_n)$ by Lemma \ref{lem7}$(ii),$ and hence $n\geq 67.$ As
$d_5(Ly)=381766<718575\leq d_7(S_n),$ we deduce that $d_5(Ly)\in
\{d_5(S_n),d_6(S_n)\}.$ However we can check that these equations
cannot hold for any $n\geq 67.$ Thus $cd(G)\nsubseteq cd(S_n).$

$(4)$ $S=Th.$ As $Out(S)$ is trivial, we have $G=S.$ Since
$d_1(G)=248<464\leq d_2(S_n),$ it follows from Lemma \ref{lem7}$(i)$
that $d_1(G)=d_1(S_n)=n-1$ and hence $n=249.$ But then
$d_2(S_n)=n(n-3)/2\geq 30627>d_2(Th).$ Thus $cd(G)\nsubseteq
cd(S_n).$

$(5)$ $S=Fi_{23}.$ As $Out(S)$ is trivial, we have $G=S.$ Since
$d_2(G)=3588<4464\leq d_4(S_n),$ it follows from Lemma
\ref{lem7}$(i)$ that $d_2(G)\in \{d_2(S_n),d_3(S_n)\}.$ However we
can check that these equations cannot hold for any $n\geq 32.$ Thus
$cd(G)\nsubseteq cd(S_n).$

$(6)$ $S=J_4.$ Since $|Out(S)|=1,$ we have $G=S$ so that $p(S)=43\in
\pi(S_n)$ and hence $n\geq 43.$ As $d_1(J_4)=1333<11438\leq
d_4(S_n),$ we deduce that $d_1(J_4)\in \{d_i(S_n)\;|\;i=1,2,3\}.$
Solving these equations, we obtain $n=1334.$ But then
$d_2(S_n)=887777>299367=d_2(J_4).$ Thus $cd(G)\nsubseteq cd(S_n).$

$(7)$ $S=Fi_{24}'.$ We have $G=S$ or $G=S.2.$ In both cases, we have
$d_1(G)=8671$ and $d_2(G)=57477.$ Observe that $d_1(G)=8671<8960\leq
d_6(S_n)$ so that $d_1(G)\in \{d_i(S_n)\;|\;i=1,\cdots,5\}.$ Solving
these equations, we obtain $n=8672.$ But then $d_2(S_n)>d_2(G).$
Thus $cd(G)\nsubseteq cd(S_n).$

$(8)$ $S=B.$ Since $|Out(S)|=1,$ we have $G=S$ so that $p(S)=47\in
\pi(S_n)$ and hence $n\geq 47.$ As $d_1(B)=4371<15134\leq d_4(S_n),$
we deduce that $d_1(B)\in \{d_1(S_n),d_2(S_n),d_3(S_n)\}.$ Solving
these equations, we have $n=95$ or $n=4372.$ If the latter case
holds then $d_2(S_n)>d_2(B),$ a contradiction. Thus $n=95.$ But then
$d_3(S_{95})=4371<d_2(B)<d_4(S_{95})=133950.$ Thus $cd(G)\nsubseteq
cd(S_n).$

$(9)$ $S=M.$ Since $|Out(S)|=1,$ we have $G=S$ so that $p(S)=71\in
\pi(S_n)$ and hence $n\geq 71.$ As $d_1(M)=196883<914480\leq
d_7(S_n),$ we deduce that $d_1(M)\in \{d_i(S_n)\;|\;i=1,\cdots,6\}.$
Solving these equations, we obtain $n=196884.$ But then
$d_2(S_n)>21296876=d_2(M).$ Thus $cd(M)\nsubseteq cd(S_n).$

{\bf Step $3.$}  If $S\cong A_m,$ with $m\geq 5,$ then $m=n.$ Let
$\lambda=(m-1,1)$ be a partition of $m.$ Since $m\geq 5,$ $\lambda$
is not self-conjugate so that the irreducible character
$\chi^\lambda$ of $S_m$ corresponding to $\lambda$ is still
irreducible upon restriction to $A_m.$ Note that $Aut(A_m)=S_m$
whenever $m\neq 6$ while $|Out(A_6)|=4.$ Assume first that $m\neq
6.$ Then $G\in\{A_m,S_m\}$ and $G$ contains an irreducible character
of degree $m-1.$ Since $cd(G)\subseteq cd(S_n),$ we have $m-1\geq
d_1(S_n)=n-1$ so that $m\geq n.$ If $m=n$ then we are done. Hence we
assume that $m>n\geq 5.$ It follows that $m\geq 6$ and hence
$d_1(A_m)=d_1(S_m)=m-1$ and thus $d_1(G)=m-1>n-1=d_1(S_n).$ If
$m\leq 17$ then $5\leq n<m\leq 17.$ Using \cite{GAP}, we can check
that $m=n.$ So we can assume that $m\geq 18.$ It follows that $17\in
\pi(G)$ and so by Lemma \ref{lem7}$(ii)$ we have $17\in \pi(S_n),$
which implies that $n\geq 17.$ Thus $17\leq n<m$. It follows from
Lemma \ref{lem1} that $d_2(S_n)=n(n-3)/2.$ Since $cd(G)\subseteq
cd(S_n)$ and $d_1(G)>d_1(S_n),$ it follows that $d_1(G)\geq
d_2(S_n).$ Then $m-1\geq n(n-3)/2.$ Since $n\geq 17,$ we have
$m-2n\geq n(n-3)/2+1-2n=n(n-7)/2+1>0$ so that $m>2n.$ Therefore
$n<m/2<m.$ By Lemma \ref{lem2}, there exists a prime $p$ such that
$m/2\leq p<m.$ It follows that $p\in \pi(G)$ but $p\not\in \pi(S_n)$
since $p>n,$ which contradicts Lemma \ref{lem7}$(ii).$  Thus $S\cong
A_n$ whenever $m\geq 5,m\neq 6.$ Now assume that $m=6,$ and
$A_6\unlhd G\leq Aut(A_6).$ It follows that $G\in \{A_6,A_6.2_1\cong
S_6, A_6.2_2\cong PGL_2(9),A_6.2_3\cong M_{10},{A_6}.2^2\}.$ We need
to show that $n=6.$ If $G\in \{A_6,S_6\},$ then $G$ contains a
character of degree $5$ so that $5\geq n-1$ and hence $n\leq 6.$ As
$10\in cd(G)$ but $10\not\in cd(S_5),$ we conclude that $n=6.$ If
$G\cong PGL_2(9)$ then $\{8,9\}\subseteq cd(S_n).$ But this cannot
happen by Lemma \ref{lem3}. Assume that one of the last two cases
holds. Then $\{9,16\}\subseteq cd(S_n)$ so that by Lemma \ref{lem3},
$n=6.$ The proof is now complete.
\end{proof}
%--------------------------------------------------------------------------------------------------------------------------
\begin{rem} Let $\lambda=(k+1,1^k)$ when $n=2k+1$ and $\lambda=(k,2,1^{k-2})$
when $n=2k.$ Then $\lambda$ is a self-conjugate partition of $n.$ We
conjecture that $\chi^{\lambda}(1)/2\in cd(A_n)-cd(S_n)$ and
$\chi^{\lambda}(1)\in cd(S_n)-cd(A_n).$ If this conjecture is true
then, in the situation of Theorem \ref{th1}, we deduce that $G\cong
S_n$ or $G\cong M_{10}$ and $n=6.$ This result will be useful in
studying the Huppert Conjecture for alternating groups.

\end{rem}

%--------------------------------------------------------------------------------------------------------------

\begin{theorem}\label{th2} Let $G$ be a group. Assume that $|G:G'|=2$ and that $G'\cong S^2$ is a unique minimal normal subgroup of
$G,$ where $S$ is a non-abelian simple group. Then $cd(G)\nsubseteq
cd(S_n)$ for any $n\geq 5.$
\end{theorem}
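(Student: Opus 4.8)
Since $G'\cong S^2$ is the \emph{unique} minimal normal subgroup of $G$, the conjugation action of $G$ on the two simple factors of $N:=G'=S_1\times S_2$ (the only two minimal normal subgroups of $N$) must interchange them: otherwise $S_1\unlhd G$ would contain a minimal normal subgroup of $G$ strictly inside $G'$, contradicting uniqueness. Fix $g\in G\setminus N$ realizing the swap. Now $Irr(N)=\{\alpha\boxtimes\beta:\alpha,\beta\in Irr(S)\}$, and since $|G:N|=2$ each $\theta=\alpha\boxtimes\beta$ has inertia group $N$ or $G$. If $I_G(\theta)=N$ then $\theta^G\in Irr(G)$ has degree $2\alpha(1)\beta(1)$, which occurs whenever $\alpha(1)\neq\beta(1)$. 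If $I_G(\theta)=G$ then $\theta$ extends to $G$ (as $G/N$ is cyclic), so $\alpha(1)\beta(1)\in cd(G)$; moreover $g^2\in N$ acts on $N$ by an inner automorphism and hence fixes every irreducible character of $N$, so $\alpha\boxtimes\alpha^g$ is $G$-invariant for every $\alpha\in Irr(S)$. Therefore
\[
  \{1\}\ \cup\ \{\,d_i(S)^2:i\ge1\,\}\ \cup\ \{\,2\,d_i(S)d_j(S):d_i(S)\neq d_j(S)\,\}\ \subseteq\ cd(G);
\]
in particular $2\,d_i(S)\in cd(G)$ for every $i$. Since no nonabelian simple group has an irreducible character of degree $\le2$, we have $d_1(S)\ge3$ and $d_1(G)=2\,d_1(S)$.

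Suppose, for a contradiction, that $cd(G)\subseteq cd(S_n)$ with $n\ge5$. Every $\chi(1)$ divides $|G|=2|S|^2$, so $\rho(G)\subseteq\pi(S)$; conversely, the Ito--Michler theorem applied to $S$ together with $\{2\,d_i(S)\}\subseteq cd(G)$ gives $\pi(S)\subseteq\rho(G)$. Hence $\rho(G)=\pi(S)$, and $\rho(G)\subseteq\rho(S_n)=\pi(n!)$ forces $p(S)\le n$. Also $2\,d_1(S)=d_1(G)\ge d_1(S_n)=n-1$, so $n\le 2\,d_1(S)+1$, and $d_1(S)^2>2\,d_1(S)\ge n-1$ gives $d_1(S)^2\ge d_2(S_n)$. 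Now $2\,d_1(S)\in cd(S_n)$ with $2\,d_1(S)\ge n-1$, so either $2\,d_1(S)=d_1(S_n)$ or $2\,d_1(S)\ge d_2(S_n)$; in the first case $n=2\,d_1(S)+1$ is odd and $\ge7$, whence $d_2(S_n)=n(n-3)/2=(2d_1(S)+1)(d_1(S)-1)>d_1(S)^2>d_1(S_n)$, making $d_1(S)^2$ a character degree of $S_n$ strictly between $d_1(S_n)$ and $d_2(S_n)$ --- impossible. So $2\,d_1(S)\ge d_2(S_n)$. If $n=8$, then $d_1(S)\ge7$, and since $64$ is the only square $\ge49$ in $cd(S_8)=\{1,7,14,20,21,28,35,42,56,64,70,90\}$ we get $d_1(S)=8$ and $2\,d_1(S)=16\notin cd(S_8)$, a contradiction. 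Hence we may assume $n\neq8$, so $d_2(S_n)=n(n-3)/2$ and $n(n-3)\le 4\,d_1(S)$.

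It remains to treat the three types of $S$. If $S\cong A_m$ with $m\ge5$, then $d_1(A_m)\le m-1$ and there is a prime in $(m/2,m]$ (Lemma~\ref{lem2} for $m\ge15$, inspection for $m\le14$), so $m/2<p(A_m)\le n$; combining with $n(n-3)\le4(m-1)<8n$ yields $n\le10$, hence $m\le19$, and the finitely many pairs $(m,n)$ are dispatched with \cite{GAP}. If $S$ is of Lie type in characteristic $p$, not isomorphic to an alternating group (using the isomorphisms recalled in the proof of Theorem~\ref{th1}) and not the Tits group, put $D=|S|_p$; the Steinberg character of $S$ has degree $D$, so $D^2\in cd(G)\subseteq cd(S_n)$ by the first display, a nontrivial prime power which is a perfect square. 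By Lemma~\ref{lem3} either $n=D^2+1$ or ($n=6$ and $D\in\{3,4\}$); the latter cannot happen, as no simple group of Lie type not isomorphic to an alternating group has $|S|_p\in\{3,4\}$ (the sole candidate, $L_2(4)$, is $A_5$). In the former case $D^2=n-1\le2\,d_1(S)<2D$, using $d_1(S)<|S|_p$ (Lemma~\ref{lem6} for $S\neq L_2(q)$, and immediate for $L_2(q)$), so $D<2$, contradicting $D\ge2$.

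Finally, if $S$ is a sporadic simple group or the Tits group, then $p(S)(p(S)-3)\le n(n-3)\le 4\,d_1(S)$; reading $p(S)$ and $d_1(S)$ off Table~\ref{Tab1}, this already fails for most such $S$, and otherwise $n$ lies in a bounded interval. In the remaining cases one argues as in Step~$2$ of Theorem~\ref{th1}: since $2\,d_1(S)\in cd(S_n)$ and $2\,d_1(S)\ge d_2(S_n)$, either $2\,d_1(S)\in\{d_2(S_n),d_3(S_n)\}$ --- excluded by solving the corresponding equations in $n$ --- or $2\,d_1(S)\ge d_4(S_n)$; iterating this with $d_4(S_n),\dots,d_{11}(S_n)$ from Lemma~\ref{lem1}, and bringing in the further degrees $d_1(S)^2$ and $2\,d_1(S)d_2(S)$ of $cd(G)$ together with $n\ge p(S)$ and \cite{GAP} for small $n$, eliminates every $n$. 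This last case is the main obstacle: for the largest sporadic groups ($O'N$, $J_4$, $Fi_{24}'$, $B$, $M$) the bound only forces $n$ to be roughly of size $\sqrt{d_1(S)}$, which can exceed the range available in \cite{GAP}, so one genuinely has to place the small degrees of $G$ among the clustered degrees $d_1(S_n),\dots,d_{11}(S_n)$ and rule out the resulting polynomial equations in $n$ --- exactly the bookkeeping performed in Theorem~\ref{th1}.
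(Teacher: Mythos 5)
Your reduction is sound and in places genuinely slicker than the paper's: the derivation of $d_1(G)=2d_1(S)$, the observation that $d_1(S)^2$ would be trapped strictly between $d_1(S_n)$ and $d_2(S_n)$ if $2d_1(S)=n-1$, and hence the uniform upper bound $n(n-3)\le 4d_1(S)$, are all correct, and your Lie-type argument ($D^2=n-1\le 2d_1(S)<2D$) is a clean variant of the paper's. The alternating case also closes, modulo a finite \cite{GAP} check. However, you have missed the paper's one cheap but decisive lever: since $a^2\in cd(S_n)$ for every $a\in cd(S)\setminus\{1\}$ and $a^2$ divides $n!$ by \cite[Theorem 3.11]{Isaacs}, every prime $r\in\pi(S)=\rho(S)$ satisfies $r^2\mid n!$, i.e.\ $n\ge 2r$, and in particular $n\ge 2p(S)$. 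You only extract $n\ge p(S)$ from $\rho(G)=\pi(S)\subseteq\pi(n!)$. This factor of two is exactly what makes the sporadic case tractable: with $n\ge 2p(S)$ the paper disposes of $23$ of the $27$ sporadic/Tits possibilities in two uniform paragraphs via $d_2(S_n)\ge p(S)(2p(S)-3)>2d_2(S)$, leaving only a handful of explicit polynomial placements.

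The genuine gap is therefore the sporadic/Tits case, which in your write-up is a plan rather than a proof. Your filter $p(S)(p(S)-3)\le 4d_1(S)$ kills only about half the sporadic groups (e.g.\ it fails to eliminate $J_2$, $HS$, $McL$, $Suz$, $Fi_{22}$, $Co_1$, $Ru$, $J_3$, $HN$, $Th$, $Ly$, $Fi_{23}$, $O'N$, $J_4$, $Fi_{24}'$, $B$, $M$), and for the large ones your admissible range of $n$ (up to $n\approx 2\sqrt{d_1(S)}$, e.g.\ $31\le n\le 210$ for $O'N$ and $71\le n\le 888$ for $M$) lies far outside what \cite{GAP} can check directly, so each surviving group really does require locating $2d_1(S)$ (and then $2d_2(S)$ or $d_1(S)^2$) among $d_2(S_n),\dots,d_{11}(S_n)$ via Lemma \ref{lem1} and ruling out integer roots of the resulting polynomials --- work you explicitly defer. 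That bookkeeping is finite and would succeed, but as written the theorem is not proved for these groups; either carry out those placements case by case, or import the paper's inequality $n\ge 2p(S)$, which shrinks the list of nontrivial cases to essentially $O'N$, $Fi_{23}$, $Fi_{24}'$ and $M$ and makes each a two-line computation.
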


\begin{proof} By way of contradiction, assume that $cd(G)\subseteq cd(S_n).$
Let $\alpha\in Irr(S)$ with $\alpha(1)>1$ and put
$\theta=\alpha\times 1\in Irr(G').$ Observe that  $\theta$ is not
$G$-invariant so that $I_G(\theta)=G'$ hence $\theta^G\in Irr(G)$
and so $\theta^G(1)=2\alpha(1)\in cd(S_n).$ On the other hand, if
$\varphi=\alpha\times \alpha\in Irr(G')$ then $\varphi$ is
$G$-invariant and since $G/G'$ is cyclic, we deduce that $\varphi$
extends to $\psi\in Irr(G),$ so that $\psi(1)=\alpha(1)^2\in
cd(S_n).$ We conclude that if $a\in cd(S)-\{1\}$ then $2a,a^2\in
cd(S_n).$ Let $r\in \pi(S).$ By Corollary \ref{cor3}, $r|a$ for some
$a\in cd(S)-\{1\}.$ Since $a^2\in cd(S_n),$ by \cite[Theorem
$3.11$]{Isaacs} we have $a^2|n!.$ Thus $r^2|n!$ so that $n\geq 2r.$
Using the classification of finite simple groups, we consider the
following cases.

Case $S=A_m,$ with $m\geq 5.$ As $m\geq 5,$ it follows from the
first paragraph that $n\geq 10.$ Assume first that $m\in
\{5,6,8,9,10\}.$ Observe that $m-1\in cd(S)$ and hence
$2(m-1),(m-1)^2\in cd(S_n).$ For these values of $m,$ we see that
$(m-1)^2$ is a prime power and so by Lemma \ref{lem3}, as $n\geq
10,$ we have $d_1(S_n)=n-1=(m-1)^2.$ As $m\geq 5,$ we obtain
$d_1(S_n)=(m-1)^2>2(m-1)>1,$ which is a contradiction since
$2(m-1)\in cd(S_n).$ Now assume that $m=7.$ As above, we have $n\geq
14.$ As $6\in cd(S),$ we obtain $6.2=12\in cd(S_n)$ and so $12\geq
d_1(S_n)=n-1$ which implies $n\leq 13,$ a contradiction. Thus we can
assume that $m\geq 11$ and hence $n\geq 22.$ We have $m-1\in cd(S)$
so that $2(m-1)$ and $(m-1)^2$ are both in $cd(S_n).$ Similarly, by
Lemma \ref{lem1}, we have $m(m-3)/2,(m-1)(m-2)/2\in cd(S)$ and so
$m(m-3),(m-1)(m-2)\in cd(S_n).$ We will show that $m<n.$ By way of
contradiction, assume that $m\geq n.$ As $n\geq 22,$ by Lemma
\ref{lem2}, there exists a prime $r$ such that $n/2<r\leq n.$ It
follows that the largest power of $r$ dividing $n!$ is $r.$  Since
$r\leq n\leq m,$ we deduce that $r\in \pi(A_m)$ and so $r^2|n!,$
which is a contradiction. Thus $m<n.$ Observer that
$1<2(m-1)<m(m-3)<(m-1)(m-2)<(m-1)^2,$ since $m\geq 11.$ Thus
$(m-1)^2\geq d_4(S_n)=n(n-1)(n-5)/6$  by Lemma \ref{lem1}. Combining
with the fact that $m< n,$ we obtain $n(n-1)(n-5)/6\leq (n-1)^2$ so
that $n(n-5)\leq 6(n-1)$ or equivalently $n(n-11)+6\leq 0,$ which is
impossible as $n\geq 22.$

Case $S$ is a finite simple group of Lie type in characteristic $p,$
with $S\neq {}^2F_4(2)'.$ Since $L_2(4)\cong L_2(5)\cong A_5,$ we
can assume that $S\not\cong L_2(4).$ Let $St$ be the Steinberg
character of $S.$ We can check that $St(1)=|S|_p\geq 5.$ Since
$St(1)\in cd(S),$ we obtain $2St(1)\in cd(S_n)$ and $St(1)^2\in
cd(S_n).$ By Lemma \ref{lem3}, assume first that $n-1=St(1)^2.$ Then
$d_1(S_n)=St(1)^2>2St(1),$ which is a contradiction. Now assume that
$St(1)^2\neq n-1.$ By Lemma \ref{lem3}, $n\in \{5,6,8,9\}.$ Since
$St(1)^2$ is an even prime power, one of the following cases holds:
$n=6,$ $St(1)^2\in \{3^2,2^4\}$ or $n=8,St(1)^2=2^6.$ Assume first
that $n=6.$ Then $St(1)\in \{3,2^2\}$ which implies that $St(1)\leq
4,$ a contradiction as $S\neq L_2(4).$ Finally, assume that $n=8.$
Then $St(1)=2^3=8.$ However $2St(1)=2^4\not\in cd(S_8),$ a
contradiction.

Case $S$ is a sporadic simple group or the Tits group. Recall that
$p(S)$ is the largest prime divisor of $S.$ We have $n\geq 2p(S).$

\noindent $(1)$ $S\in
\{M_{11},M_{12},J_1,M_{22},J_2,M_{23},HS,J_3,M_{24},He,Ru,Co_3,Co_2,Co_1,{}^2F_4(2)'\}.$
These cases can be eliminated as follows: Since $n\geq 2p(S)\geq
14,$ we have $d_2(S_n)=n(n-3)/2\geq p(S)(2p(S)-3).$ Next observe
that $2d_i(S)\in cd(G)\subseteq cd(S_n),$ for $i=1,2$ and that
$1<2d_1(S)<2d_2(S).$ In each case, we have $p(S)(2p(S)-3)>2d_2(S)$
and hence $d_2(S_n)>2d_2(S)>2d_1(S)>1,$ which contradicts Lemma
\ref{lem7}$(i).$

 %--------------------------------------------------------------------------------------
\noindent $(2)$ $S\in \{McL, Suz, Fi_{22},HN,Ly,Th,J_4,B\}.$ Since
$n\geq 2p(S)\geq 14,$ we have $d_2(S_n)=n(n-3)/2\geq p(S)(2p(S)-3).$
We have $d_2(S_n)\geq p(S)(2p(S)-3)>2d_1(S)$ and so
$2d_1(S)=d_1(S_n)=n-1$ so that $n=2d_1(S)+1.$ But then
$d_2(S_n)=n(n-3)/2=(d_1(S)-1)(2d_1(S)+1)>2d_2(S)>2d_1(S)>1,$ which
contradicts Lemma \ref{lem7}$(i)$ as $2d_i(S)\in cd(G)\subseteq
cd(S_n),$ where $i=1,2.$

\noindent $(3)$ $S=O'N.$ Then $p(S)=31.$ We have $n\geq 2p(S)=62$
and so by Lemma \ref{lem1}, $d_7(S_n)\geq 520025.$ As
$d_8(S)=58311,$ we have $2d_8(S)=116622\in cd(S_n).$ Note that
$2d_i(S)\in cd(S_n)$ for $i=1,2,\cdots, 8$ and so $2d_i(S)\geq
d_i(S_n)$ for all $1\leq i\leq 8.$  As $d_7(S_n)\geq
520025>116622=2d_8(S),$ we get a contradiction.

\noindent $(4)$ If $S=Fi_{23}$ then $p(S)=23.$ We have $n\geq
2p(S)=46$ and so by Lemma \ref{lem1}, $d_4(S_n)\geq 14145.$ As
$d_2(S)=3588,$ we obtain $2d_2(S)=7176\in cd(S_n).$ Since
$d_4(S_n)>7176>2d_1(S),$ we must have $7176\in
\{d_2(S_n),d_3(S_n)\}.$ However, we can check that these cases
cannot happen.

\noindent $(5)$ $S=Fi_{24}'.$ Then $p(S)=29$ and $n\geq 2p(S)=58$ so
that by Lemma \ref{lem1}, $d_4(S_n)\geq 29203.$ As
$\{8671,57477\}\subseteq cd(S),$ we obtain
$\{17342,114954\}\subseteq cd(S_n).$ Since $d_4(S_n)>17342,$ we have
$17342\in \{d_1(S_n),d_2(S_n),d_3(S_n)\}.$ It follows that
$17342\leq (n-1)(n-2)/2$ and hence $n\geq 188.$ But then
$d_2(S_n)\geq 17390>17342.$ Thus $d_1(S_n)=n-1=17342$ hence
$n=17343$ and so $d_2(S_n)\geq 150363810>114954,$ a contradiction.

\noindent $(6)$ $S=M.$ Then $p(S)=71$ and $n\geq 2p(S)=142.$ By
Lemma \ref{lem1}, $d_4(S_n)\geq 457169.$ As
$\{196883,21296876\}\subseteq cd(S),$ we obtain
$\{393766,42593752\}\subseteq cd(S_n).$ Since $d_4(S_n)>393766,$ we
have $393766\in \{d_1(S_n),d_2(S_n),d_3(S_n)\}.$ It follows that
$393766\leq (n-1)(n-2)/2$ and hence $n\geq 889.$ As $d_2(S_n)\geq
393827>393766,$ we have $d_1(S_n)=n-1=393766$ hence $n=393767$ and
so $d_2(S_n)\geq 77525634494>42593752>393766,$ a contradiction. The
proof is complete.
\end{proof}

{\bf Proof of Theorem \ref{main}.} Suppose that $X_1(G)=X_1(S_n).$
It follows that $|G|=|S_n|=n!,$ $|G:G'|=2,$ $k(G)=k(S_n)$ and
$cd(G)=cd(S_n).$ For $n\leq 3,$ the result is trivial. If $n=4,$
then the result follows from  \cite[Chapter $17,$ Exercise
$2$]{Ber1}. Thus from now on, we assume that $n\geq 5.$

We first show that $G'=G''.$ By way of contradiction, assume that
$G''<G'.$ Let $N\leq G'$ be a normal subgroup of $G$ maximal such
that $G/N$ is solvable and $G'/N$ is the unique minimal normal
subgroup of $G/N.$ By \cite[Lemma $12.3$]{Isaacs}, all non-linear
irreducible characters of $G/N$ have equal degree $f$ and either
$G/N$ is a $p$-group, $Z(G/N)$ is cyclic and $G/N/Z(G/N)$ is
elementary abelian of order $f^2$ or $G/N$ is a Frobenius group with
an abelian Frobenius complement of order $f,$ and $G'/N$ is the
Frobenius kernel and is an elementary abelian $p$-group. Assume
first that $G/N$ is a $p$-group. As $G/N/Z(G/N)$ is abelian, we have
$G'/N\leq Z(G/N).$ Since $|G/N:G'/N|=2$ and $G/N$ is non-abelian, we
deduce that $G'/N=Z(G/N)$ and so $G/N/Z(G/N)$ is a cyclic group of
order $2,$ which is a contradiction as $G/N/Z(G/N)$ is elementary
abelian of order $f^2.$ Thus the second situation holds. It follows
that $f=|G/N:G'/N|=|G:G'|=2.$ Therefore $2\in cd(G)=cd(S_n),$ which
is impossible as the minimal non-trivial irreducible character
degree of $S_n$ is  $n-1\geq 4$ as $n\geq 5.$

Let $M\leq G'$ be a normal subgroup of $G$ so that $G'/M$ is a chief
factor of $G$ and so $G'/M\cong S^k,$ where $S$ is a non-abelian
simple group where $k\geq 1.$ Let $C/M=C_{G/M}(G'/M).$ Then $M\leq
C\unlhd G.$

Assume first that $C=M.$ Then $G'/M$ is the unique minimal normal
subgroup of $G/M.$ Since $|G/M:G'/M|=|G:G'|=2,$ we deduce that $k$
is at most $2.$ However $k$ cannot be $2$ by Theorem \ref{th2}. Thus
$k=1$ and so $G/M$ is an almost simple group with socle $G'/M$ and
$cd(G/M)\subseteq cd(S_n).$ By Theorem \ref{th1}, we have $G'/M\cong
A_n.$ It follows that $|G/M|=2|G'/M|=n!=|S_n|$ and so $M=1$ as
$|G|=|S_n|.$ Thus $G$ is an almost simple group with socle $A_n$ and
$|G|=n!.$ If $n\neq 6$ then as $Aut(A_n)=S_n$ we deduce that $G\cong
S_n.$ Now assume that $n=6.$ Then $G$ is isomorphic to one of the
following groups $S_6\cong A_6.2_1, PGL_2(9)\cong A_6.2_2\mbox{ and
} M_{10}\cong A_6.2_3.$ Using \cite{atlas}, we can see that $G$ must
be isomorphic to $S_6.$

 Finally assume that $C\neq M.$
It follows that $C/M$ is a non-trivial normal subgroup of $G/M$ and
so $C/M\cap G'/M$ is trivial. Thus $G'<G'C\leq G.$ Since $|G:G'|=2,$
we deduce that $G=G'C$ and hence $G/M=G'/M\times C/M,$ where $C/M$
is a cyclic subgroup of order $2.$ Thus $cd(G'/M)=cd(G/M)\subseteq
cd(S_n).$ Applying Theorem \ref{th1} again, we obtain $G'/M\cong
A_n,$ and hence $G/M\cong A_n\times \Z_2.$ By comparing the orders,
we deduce as in previous case that $M=1$ and so $G\cong A_n\times
\Z_2.$ We now show that this case cannot happen. In fact, we have
$k(G)=2k(A_n),$ and hence it suffices to show that $k(S_n)<2k(A_n).$
 Let $\lambda$ be a partition of
$n$ and denote by $\chi^\lambda$ the irreducible character of $S_n$
associated to $\lambda.$ If $\lambda$ is not self-conjugate, that is
$\lambda\neq \lambda',$ where $\lambda'$ denotes the conjugate of
$\lambda,$ then $({\chi^\lambda})_{A_n}=({\chi^{\lambda'}})_{A_n}\in
Irr(A_n).$ Otherwise, $({\chi^\lambda})_{A_n}=\chi^{\lambda
+}+\chi^{\lambda -},$ where $\chi^{\lambda +},\chi^{\lambda -}\in
Irr(A_n)$ are two non-equivalent irreducible characters of the same
degree. Let $p_s(n)$ be the number of self-conjugate partitions of
$n.$ Then $k(A_n)=(k(S_n)-p_s(n))/2+2p_s(n).$ Hence
$k(S_n)=2k(A_n)-3p_s(n).$ So it suffices to show that $p_s(n)\geq 1$
for $n\geq 5.$  In fact, if $n=2l+1,$ then we take
$\lambda=(l+1,1^l)$ and if $n=2l$ then take $\lambda=(l,2,1^{l-1}).$
Then $\lambda$ is a self-conjugate partition of $n$ so that
$p_s(n)\geq 1.$ This finishes the proof. $\hfill\square$

\subsection*{Acknowledgment} The author is grateful to Prof. Jamshid
Moori for his help with the preparation of this work.


\begin{thebibliography}{9}
\bibitem{bal} A. Balog, C. Bessenrodt, J. Olsson and K. Ono, \textit{Prime power degree representations of the symmetric and alternating groups,} J. London Math. Soc. (2) \textbf{64} (2001), no. 2, 344--356.

\bibitem{Ber1} {Y. Berkovich and E. Zhmud$ \acute{}$}, \textit{ Characters of finite groups, Part 1,2.} Translations of Mathematical Monographs, 172, 181. AMS, Providence, RI, 1997.


\bibitem{Brauer} R. Brauer, \textit{Representations of finite groups,} in: Lectures on Modern Mathematics, Vol. I, 1963.

\bibitem{car85} Roger W. Carter, \textit{Finite Groups of Lie Type. Conjugacy classes and complex characters,} Pure and Applied Mathematics (New York). A Wiley-Interscience Publication. John Wiley and Sons, New York, 1985.

\bibitem{atlas} J.H. Conway, R.T. Curtis, S.P. Norton, R.A. Parker, R.A. Wilson, \textit{Atlas of Finite Groups,} Oxford University Press, Eynsham, 1985.

\bibitem{GAP} The GAP Group, GAP - Groups, Algorithms, and Programming, Version 4.4.10, http://www.gap-system.org, 2007.

\bibitem{Hupp} B. Huppert, \textit{Some simple groups which are determined by the set of their character degrees. I,} Illinois J. Math. \textbf{44} (2000), no. 4, 828--842.

\bibitem{Isaacs} M. Isaacs, \textit{Character theory of finite groups,} Corrected reprint of the 1976 original [Academic Press, New York]. AMS Chelsea Publishing, Providence, RI, 2006.

\bibitem{Lusz} G. Lusztig, \textit{On the representations of reductive groups with disconnected centre. Orbites unipotentes et representations, I.,}  Asterisque  No. 168 (1988), \textbf{10}, 157--166.

\bibitem{Malle08} G. Malle, \textit{Extensions of unipotent characters and the inductive McKay condition,} J. Algebra \textbf{320} (2008), no. 7, 2963--2980.

\bibitem{Malle99} G. Malle, \textit{Almost irreducible tensor squares,}  Comm. Algebra \textbf{ 27}  (1999),  no. 3, 1033--1051.

%\bibitem{Malle} G. Malle and A. Zalesskii, \textit{Prime power degree representations of quasi-simple groups,} Arch. Math. (Basel) \textbf{77} (2001), no. 6, 461--468.

\bibitem{Mich} G. Michler, \textit{Brauer's conjectures and the classification of finite simple groups,}  Representation theory, II (Ottawa, Ont., 1984),  129--142, Lecture Notes in Math., \textbf{1178}, Springer, Berlin, 1986.

\bibitem{MM} G. Malle and  A. Moret$\acute{\mbox{o}},$ \textit{A dual version of Huppert's $\rho -\sigma $
conjecture,} Int. Math. Res. Not. IMRN 2007 (2007), 14 pp.

\bibitem{Nagao} H. Nagao, \textit{On the groups with the same table of characters as symmetric groups,} J. Inst. Polytech. Osaka City Univ. Ser. A. 8 (1957), 1--8.

\bibitem{Ras} R. Rasala, \textit{On the minimal degrees of characters of $S_{n},$}  J. Algebra  \textbf{45}  (1977), no. 1, 132--181.

\bibitem{Hung} H.P. Tong-Viet, \textit{Alternating and sporadic simple groups are determined by their character degrees,}
Algebra and Representation Theory (2010).

\bibitem{Wake} T. Wakefield, \textit{Verifying Huppert's conjecture for $PSL_3(q)$ and $PSU_3(q^2),$} Comm. Algebra \textbf{37} (2009), no. 8, 2887--2906.

\end{thebibliography}
\end{document}